\documentclass[11pt,reqno]{amsart}

\usepackage[T1]{fontenc}
\usepackage[utf8]{inputenc}
\usepackage{lmodern}
\usepackage{microtype}
\usepackage{mathtools,amssymb}
\usepackage{enumitem}
\usepackage{xcolor}
\usepackage[colorlinks=true,linkcolor=blue,citecolor=blue,urlcolor=blue,filecolor=blue]{hyperref}
\hypersetup{
  pdftitle={Causal Invariance of the Contact Number of Pseudo-Riemannian Submanifolds},
  pdfauthor={Juan S. Gomez}
}
\numberwithin{equation}{section}

\allowdisplaybreaks
\setlist[enumerate]{label=\textup{(\roman*)},leftmargin=*,itemsep=2pt,topsep=4pt}
\setlist[itemize]{leftmargin=*,itemsep=2pt,topsep=4pt}

\theoremstyle{plain}
\newtheorem{theorem}{Theorem}[section]
\newtheorem{proposition}[theorem]{Proposition}
\newtheorem{lemma}[theorem]{Lemma}
\newtheorem{corollary}[theorem]{Corollary}

\theoremstyle{definition}
\newtheorem{definition}[theorem]{Definition}
\newtheorem{example}[theorem]{Example}

\theoremstyle{remark}
\newtheorem{remark}[theorem]{Remark}

\newcommand{\E}{\mathbb E}
\newcommand{\R}{\mathbb R}
\newcommand{\N}{\mathbb N}
\newcommand{\Span}{\operatorname{span}}
\newcommand{\cnum}{\mathfrak c}
\newcommand{\cplus}{\mathfrak c_{+}}
\newcommand{\cminus}{\mathfrak c_{-}}
\newcommand{\normal}{\perp}
\newcommand{\ip}[2]{\langle #1,#2\rangle}
\newcommand{\II}{\mathrm{II}}

\title[Causal invariance of the contact number]
{Causal Invariance of the Contact Number\\
of Pseudo-Riemannian Submanifolds}

\author{Juan S. G\'omez}
\address{Departamento de Matem\'aticas, Instituto E.S. Miguel Servet, 41020 Sevilla, Spain}
\email{jsalvador.gomez.edu@juntadeandalucia.es}

\subjclass[2020]{Primary 53C40; Secondary 53B25, 53B30}
\keywords{Pseudo-Riemannian submanifold, contact number, normal section, causal character, isotropic immersion, helical immersion}
\date{}

\begin{document}
\raggedbottom

\begin{abstract}
Let $M_s^n$ be a non-degenerate pseudo-Riemannian submanifold of a
pseudo-Euclidean space with indefinite induced metric.  Restricting the
contact condition to spacelike or to timelike tangent directions gives two
a priori different one-sided contact numbers.  We prove that, for every
prescribed order, ordinary, spacelike and timelike contact are equivalent;
consequently, the three contact numbers coincide.  The proof is inductive.
A one-sided contact hypothesis first yields higher-order orthogonality
identities for the iterated covariant derivatives of the second fundamental
form.  Their diagonal scalar expressions are then extended from either unit
pseudo-sphere to the whole tangent space by a polynomial argument, and a
second induction proves causal invariance.  We also construct an explicit
Lorentzian surface in $\mathbb E_4^8$ whose ordinary, spacelike and timelike
contact numbers are all equal to six, and place the infinite-contact limit
in the semi-Riemannian theory of geodesic normal sections and helical
immersions.
\end{abstract}

\maketitle
\enlargethispage{2pt}

\section{Introduction}

Normal sections provide a direct comparison between the intrinsic and
extrinsic geometries of a submanifold.  Given a unit tangent vector $u$ at
a point $p$, one considers the affine space generated by $u$ and the
normal space at $p$.  The component of its intersection with the
submanifold through $p$ is the normal section in the direction $u$.  The
order to which this curve agrees with the geodesic having the same
initial data is the order of contact in that direction.

Submanifolds with geodesic normal sections were studied by Chen and
Verheyen \cite{ChenVerheyen}.  Chen and Li introduced the contact number
for Euclidean submanifolds in \cite{ChenLi}.  They proved that every
Euclidean submanifold has contact of order at least two, that contact of
order at least three is equivalent to isotropy, and that contact of order
at least four is equivalent to constant isotropy
\cite[Thm.~4.1]{ChenLi}.  They also proved that contact of order $k\geq3$ is equivalent to
requiring every unit tangent vector $u$ to be an eigenvector of the
shape operator $A_{(\overline\nabla^{\,j}\II)(u^{j+2})}$ for every
$0\leq j\leq k-3$; see \cite[Thm.~6.1]{ChenLi}.  Further
Euclidean developments include the study of surfaces with high contact
number \cite{ChenHighContact}, while Miura extended the invariant to
affine immersions \cite{Miura}.

Cabrerizo, Fern\'andez and G\'omez extended this theory to non-degenerate
pseudo-Riemannian submanifolds of pseudo-Euclidean space in \cite{CFG}.
The Euclidean low-order characterizations and this eigenvector criterion
became
\cite[Thms.~3.7 and~3.8]{CFG}.  In that paper, however, contact was
required simultaneously for every unit non-null vector.  Thus the unit
bundle used there was the full disjoint union
$UM_s^n=U_+M_s^n\cup U_-M_s^n$, and no separate spacelike or timelike contact number
was considered.  The broader theory of isotropic immersions originates
with O'Neill \cite{ONeillIsotropic}; relevant developments include
\cite{ItohOgiue,MaedaTsukada,BoumukiMaeda}.  In the indefinite setting,
subsequent work addressed rigidity \cite{CFGRigidity}, marginally trapped
surfaces \cite{CFGMarginallyTrapped}, and structural properties of
isotropic submanifolds \cite{CFGIsotropicSubmanifolds}.  We use throughout
the terms \emph{isotropic} and \emph{constant isotropic}.

The infinite-order endpoint of this circle of ideas is closely related to
the theory of helical immersions, rooted in Besse's study of strongly
harmonic manifolds \cite{Besse}.  In Riemannian space forms, helicality and
geodesic normal sections are equivalent; see Chen--Verheyen
\cite{ChenVerheyen}, Verheyen \cite{Verheyen}, and Hong--Houh
\cite{HongHouh}.  The indefinite theory is subtler.  Fueki introduced
pointed helical submanifolds in the pseudo-Riemannian setting \cite{Fueki},
and Miura developed the causal theory of helical geodesic immersions and
geodesic normal sections \cite{MiuraHelical,MiuraGNS}.  These results concern
the all-order situation; the question addressed here is finite and order by
order.

A question specific to indefinite signature is left open by the preceding
results.  Since the spacelike and timelike unit bundles are disjoint, one
may define contact by testing only $U_+M_s^n$, or only $U_-M_s^n$.  Denoting the
ordinary, spacelike and timelike contact numbers by $\cnum(M_s^n)$,
$\cplus(M_s^n)$ and $\cminus(M_s^n)$, respectively, the definitions give only
\begin{equation}\label{eq:intro-minimum}
 \cnum(M_s^n)=\min\{\cplus(M_s^n),\cminus(M_s^n)\}.
\end{equation}
There is no immediate geometric reason for the two one-sided numbers to
coincide: the relevant normal sections arise from distinct open regions
of the tangent bundle and may have different causal character.

The main result shows that this apparent distinction disappears.  For
every $k\geq2$, ordinary contact of order $k$, spacelike contact of order
$k$ and timelike contact of order $k$ are equivalent.  Consequently,
\begin{equation*}
 \cnum(M_s^n)=\cplus(M_s^n)=\cminus(M_s^n).
\end{equation*}
The proof is the original induction developed in the author's doctoral
dissertation.  Its principal ingredient is a one-sided family of
higher-order orthogonality identities.  These identities are obtained by
successively transferring covariant derivatives between the two factors,
with Codazzi and Ricci controlling the changes in the order of the
arguments.  Their diagonal terms are then propagated from either one of
the two unit pseudo-spheres to all tangent directions by a polynomial
argument.  A second induction, now on the order of contact, proves the
causal invariance.

The provenance of the ingredients is important.  The successive-derivative
comparison underlying the eigenvector criterion is due to Chen and Li in
the Euclidean case and appears in bilateral pseudo-Riemannian form in
\cite{CFG}.  Its fixed-sign formulation was recorded as
\cite[Thm.~4.12]{GomezThesis}; the same calculation is carried out while
keeping one causal sign throughout.  We therefore use that criterion as a
known result.  The finite-order ingredients that go beyond those earlier
criteria are the one-sided higher-order identities and the induction proving
causal invariance, recorded in the dissertation as Theorems~4.22 and~4.27.
Under the stronger, all-order hypothesis of geodesic normal sections, Miura
obtained related identities and causal equivalences \cite{MiuraGNS}.  The
present theorem is the finite-order counterpart: it assumes contact only
through a prescribed order $k$ and concludes the missing causal family at
that same order.

Besides the main theorem, we give an explicit Lorentzian surface in
$\mathbb E_4^8$ with contact number exactly six.  It is motivated by the
Euclidean contact-six construction of Chen and Li \cite[Example~6.8]{ChenLi}
and complements the Riemannian high-contact families studied later by Chen
\cite{ChenHighContact}.  The final discussion returns to the
infinite-contact limit, where expansions provide geodesic normal sections
but need not be helical.

Section~\ref{sec:preliminaries} fixes the notation and recalls the
one-sided form of the contact criterion.
Section~\ref{sec:higher-identities} develops the transfer identity and the
finite one-sided higher-order consequences.  The causal-invariance theorem
is proved in Section~\ref{sec:causal-invariance}.  The final section gives
low-order consequences, the Lorentzian contact-six example, and the relation
with geodesic normal sections and helical immersions.

\section{Preliminaries and contact orders}\label{sec:preliminaries}

We follow the standard semi-Riemannian conventions of \cite{ONeill}.  Let
$\E^{n+d}_{\nu}$ denote the $(n+d)$-dimensional pseudo-Euclidean space of
index $\nu$, and let
\begin{equation*}
 \psi\colon M_s^n\longrightarrow \E^{n+d}_{\nu}
\end{equation*}
be an isometric immersion of a connected pseudo-Riemannian manifold of
index $s$.  Throughout, $M_s^n$ is non-degenerate, $n\geq2$, and $0<s<n$.
We identify tangent vectors with their images under $d\psi$ and use the
same symbol for a curve in $M_s^n$ and its image under $\psi$.  The Levi-Civita connection of $M_s^n$, the normal connection and the ambient flat
connection are denoted by $\nabla$, $\nabla^\perp$ and
$\widetilde\nabla$, respectively.

The Gauss and Weingarten formulas are
\begin{align*}
 \widetilde\nabla_XY&=\nabla_XY+\II(X,Y),\\
 \widetilde\nabla_X\xi&=-A_\xi X+\nabla_X^\perp\xi,
\end{align*}
where $\II$ is the second fundamental form and $A_\xi$ is the shape operator
with respect to the normal field $\xi$.  They satisfy
\begin{equation}\label{eq:shape-II}
 \ip{A_\xi X}{Y}=\ip{\II(X,Y)}{\xi}.
\end{equation}

We use the curvature convention
\begin{equation*}
 R(X,Y)Z=\nabla_X\nabla_YZ-\nabla_Y\nabla_XZ-\nabla_{[X,Y]}Z,
\end{equation*}
and define $R^\normal$ analogously from $\nabla^\perp$.  Since the ambient space is
flat, the Gauss and Ricci equations are
\begin{align*}
 \ip{R(X,Y)Z}{W}
 &=\ip{\II(X,W)}{\II(Y,Z)}-\ip{\II(X,Z)}{\II(Y,W)},\\
 \ip{R^\normal(X,Y)\xi}{\eta}
 &=\ip{[A_\xi,A_\eta]X}{Y}.
\end{align*}

The covariant derivative of $\II$ is defined by
\begin{equation*}
 (\overline\nabla \II)(Y,Z,X)
 =\nabla_X^\perp(\II(Y,Z))-\II(\nabla_XY,Z)-\II(Y,\nabla_XZ).
\end{equation*}
The Codazzi equation takes the form
\begin{equation*}
 (\overline\nabla \II)(Y,Z,X)
 =(\overline\nabla \II)(X,Z,Y),
\end{equation*}
for all tangent vector fields $X,Y,Z$.  In particular,
$\overline\nabla \II$ is symmetric in all three arguments.  These standard formulas may be found, for instance, in \cite{ONeill}; in
the notation used here, compare also \cite[Eqs.~(1)--(6)]{CFG}.

Set $\overline\nabla^{\,0}\II=\II$.  For $j\geq1$, define recursively
\begin{equation*}
\begin{aligned}
 &(\overline\nabla^{\,j}\II)(X_1,\ldots,X_{j+2})\\
 &\quad=\nabla_{X_{j+2}}^\perp
 (\overline\nabla^{j-1}\II)(X_1,\ldots,X_{j+1})\\
 &\qquad-\sum_{a=1}^{j+1}
 (\overline\nabla^{j-1}\II)
 (X_1,\ldots,\nabla_{X_{j+2}}X_a,\ldots,X_{j+1}).
\end{aligned}
\end{equation*}
Thus the new differentiating argument occupies the last slot.  As usual,
$u^a$ denotes $a$ consecutive copies of $u$ among the arguments.  The
tensor $\overline\nabla^{\,j}\II$ remains symmetric in its first two
arguments.
For $a\geq3$, the Ricci commutation identity gives the particular formula
\begin{equation}\label{eq:Ricci-commutation}
\begin{aligned}
 &(\overline\nabla^{\,a-1}\II)(u^a,v)-(\overline\nabla^{\,a-1}\II)(u^{a-1},v,u)\\
 &\quad=R^\normal(v,u)(\overline\nabla^{\,a-3}\II)(u^{a-1})
 -\sum_{t=0}^{a-2}
 (\overline\nabla^{\,a-3}\II)(u^t,R(v,u)u,u^{a-2-t}).
\end{aligned}
\end{equation}
This is the only higher-order commutation formula needed below; compare
\cite[Eq.~(1.16)]{GomezThesis}.

Let $p\in M_s^n$ and let $u\in T_pM_s^n$ be unit and non-null, with
$\ip{u}{u}=\varepsilon\in\{1,-1\}$.  Denote by $\gamma_u$ the unit-speed
geodesic satisfying $\gamma_u(0)=p$ and $\gamma_u'(0)=u$.  The affine
normal-section space determined by $(p,u)$ is
\begin{equation*}
 E(p,u):=\psi(p)+\bigl(\R u\oplus T_p^\normal M_s^n\bigr).
\end{equation*}
Locally, the component of $\psi(M_s^n)\cap E(p,u)$ through $\psi(p)$ is a
regular non-degenerate curve.  Its unit-speed parametrization with initial
velocity $u$ is denoted by $\beta_u$ and called the normal section at
$(p,u)$.  Since $E(p,u)$ is affine,
\begin{equation}\label{eq:normal-section-derivatives}
 \beta_u^{(r)}(0)\in\R u\oplus T_p^\normal M_s^n,
 \qquad r\geq1.
\end{equation}

\begin{definition}
The geodesic $\gamma_u$ and the normal section $\beta_u$ have
\emph{contact of order $k$} at $(p,u)$ if
\begin{equation}\label{eq:directional-contact}
 \gamma_u^{(r)}(0)=\beta_u^{(r)}(0),
 \qquad 0\leq r\leq k,
\end{equation}
where the derivatives are taken in the ambient affine space.
\end{definition}

Set
\begin{equation*}
 U_+M_s^n=\{u\in TM_s^n:\ip{u}{u}=1\},
 \qquad
 U_-M_s^n=\{u\in TM_s^n:\ip{u}{u}=-1\}.
\end{equation*}
Both bundles are nonempty.

\begin{definition}
For $\varepsilon\in\{1,-1\}$, the immersion has
\emph{$\varepsilon$-contact of order $k$} if
\eqref{eq:directional-contact} holds for every $u\in U_\varepsilon M_s^n$.
We call this \emph{spacelike contact} when $\varepsilon=1$ and
\emph{timelike contact} when $\varepsilon=-1$.  The immersion has
\emph{contact of order $k$} if it has both spacelike and timelike contact
of order $k$.

The corresponding contact numbers are
\begin{align*}
 \cplus(M_s^n)&=\sup\{k\in\N : M_s^n\text{ has spacelike contact of order }k\},\\
 \cminus(M_s^n)&=\sup\{k\in\N : M_s^n\text{ has timelike contact of order }k\},\\
 \cnum(M_s^n)&=\sup\{k\in\N : M_s^n\text{ has contact of order }k\},
\end{align*}
with value $\infty$ when the relevant set is unbounded.
\end{definition}

By definition, \eqref{eq:intro-minimum} holds.  The following result is
\cite[Thm.~4.1(1)]{ChenLi} in the Euclidean case and
\cite[Thm.~3.7(1)]{CFG} in the pseudo-Euclidean setting.

\begin{proposition}\label{prop:contact-two}
Every pseudo-Riemannian submanifold $M_s^n$ of a pseudo-Euclidean space has contact
of order at least two.  Hence $\cnum(M_s^n)$, $\cplus(M_s^n)$ and $\cminus(M_s^n)$ are
at least two.
\end{proposition}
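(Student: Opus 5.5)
We verify \eqref{eq:directional-contact} for $r=0,1,2$ at an arbitrary non-null unit vector $u\in T_pM_s^n$ with $\ip{u}{u}=\varepsilon$, treating both signs of $\varepsilon$ simultaneously. Since $\gamma_u$ and $\beta_u$ are both curves in $\psi(M_s^n)$, each can be written in the form $\psi\circ c$ for some curve $c$ in $M_s^n$ with $c(0)=p$ and $c'(0)=u$. This already gives the orders $r=0$ and $r=1$: $\gamma_u(0)=\beta_u(0)=\psi(p)$ and $\gamma_u'(0)=\beta_u'(0)=u$. The normalization of $\beta_u$ as the unit-speed parametrization with initial velocity $u$ is what fixes the sign and orientation here.

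For $r=2$, apply the Gauss formula to any curve $c$ in $M_s^n$ through $p$ with velocity $u$:
\begin{equation*}
 (\psi\circ c)''(0)=\nabla_{c'}c'(0)+\II(u,u).
\end{equation*}
For the geodesic, $\nabla_{\gamma_u'}\gamma_u'=0$, so $\gamma_u''(0)=\II(u,u)$.

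For the normal section $\beta_u$, write $\beta_u''(0)=T+\II(u,u)$, where $T=\nabla_{\beta_u'}\beta_u'(0)\in T_pM_s^n$ is the tangential part. By \eqref{eq:normal-section-derivatives}, $\beta_u''(0)\in\R u\oplus T_p^\normal M_s^n$. Since $\II(u,u)$ is normal, this forces $T\in\R u$, say $T=\lambda u$. Because $\beta_u$ is unit speed, $\ip{\beta_u'}{\beta_u'}=\varepsilon$ is constant. Differentiating with the metric connection gives $\ip{\nabla_{\beta_u'}\beta_u'}{\beta_u'}=0$, and at $t=0$ this reads $\lambda\varepsilon=0$. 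Since $u$ is non-null, $\lambda=0$, so $T=0$ and $\beta_u''(0)=\II(u,u)=\gamma_u''(0)$.

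Hence contact of order two holds at every $u\in U_+M_s^n\cup U_-M_s^n$, and all three contact numbers are at least two.

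The only delicate point is the preliminary fact that $\beta_u$ is a well-defined regular non-degenerate curve tangent to $u$. This follows from a transversality argument: $T_pM_s^n\cap(\R u\oplus T_p^\normal M_s^n)=\R u$ because the metric is non-degenerate, so locally $\psi(M_s^n)\cap E(p,u)$ is a smooth curve with tangent $u$. The paper takes this as given, so nothing further is needed.
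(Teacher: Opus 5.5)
Your proof is correct and follows essentially the same route as the paper. Orders zero and one hold by construction. For order two, the Gauss formula together with \eqref{eq:normal-section-derivatives} places the tangential acceleration $\nabla_{\beta_u'}\beta_u'(0)$ in $\R u$, the unit-speed condition makes it orthogonal to $u$, and non-nullity of $u$ then forces it to vanish, giving $\beta_u''(0)=\II(u,u)=\gamma_u''(0)$. Your transversality remark on the well-definedness of $\beta_u$ correctly supplies what the paper simply assumes.
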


\begin{proof}
Let $u\in U_\varepsilon M_s^n$ and put $T=\beta_u'$.  Since $\beta_u$ has
constant speed, $\nabla_TT$ is orthogonal to $T$.  On the other hand,
\eqref{eq:normal-section-derivatives} and the Gauss formula imply
$\nabla_TT(0)\in\R u$.  Since $u$ is non-null,
$\nabla_TT(0)=0$.  Therefore
\begin{equation*}
 \beta_u''(0)=\II(u,u)=\gamma_u''(0),
\end{equation*}
and the position and velocity agree by construction.
\end{proof}

\begin{definition}
Following the pseudo-Riemannian terminology of \cite{CFG}, extending
O'Neill's notion \cite{ONeillIsotropic}, the immersion is
\emph{isotropic} at $p$ if there exists $\lambda(p)\in\R$ such that
\begin{equation}\label{eq:isotropy}
 \ip{\II(u,u)}{\II(u,u)}
 =\lambda(p)\ip{u}{u}^2
 \qquad\text{for every }u\in T_pM_s^n.
\end{equation}
It is \emph{isotropic} if this holds at every point.  The resulting
function $\lambda$ is called the \emph{isotropy function}.  If $\lambda$
is constant, the immersion is said to be \emph{constant isotropic}, and
its value is called the \emph{isotropy constant}.
\end{definition}

The low-order relation with isotropy is already known and will be used
in the initial step of the main induction.

\begin{theorem}[Low-order contact]
\label{thm:known-low-orders}
A pseudo-Riemannian submanifold $M_s^n$ of a pseudo-Euclidean space has
contact of order at least three if and only if it is isotropic, and
contact of order at least four if and only if it is constant isotropic.
\end{theorem}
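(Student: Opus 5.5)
We reduce Theorem~\ref{thm:known-low-orders} to an order-by-order comparison of the Taylor coefficients of $\beta_u$ and $\gamma_u$ at $s=0$, as in the proof of Proposition~\ref{prop:contact-two}. The plan is to compute the third and fourth derivatives of both curves and read off the conditions.

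For the geodesic we have $\gamma_u'=T$ with $\nabla_TT=0$. Iterating the Gauss and Weingarten formulas gives
\begin{equation*}
 \gamma_u'''(0)=-A_{\II(u,u)}u+(\overline\nabla\II)(u,u,u).
\end{equation*}
For the normal section, write $T=\beta_u'$. We already know $\nabla_TT(0)=0$. Differentiating the Gauss formula once more gives
\begin{equation*}
 \beta_u'''(0)=\nabla_T\nabla_TT(0)-A_{\II(u,u)}u+(\overline\nabla\II)(u,u,u)+3\II(u,\nabla_TT)(0),
\end{equation*}
and the last term vanishes. By \eqref{eq:normal-section-derivatives} the tangential part of $\beta_u'''(0)$ lies in $\R u$. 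So contact of order three at $(p,u)$ holds if and only if the tangential part of $-A_{\II(u,u)}u$ is a multiple of $u$, that is, if and only if $u$ is an eigenvector of $A_{\II(u,u)}$. (The term $\nabla_T\nabla_TT(0)$ is then forced to equal the component of $A_{\II(u,u)}u$ along $u$ minus that component, which is zero, using $\ip{\nabla_T\nabla_TT}{T}=-\ip{\nabla_TT}{\nabla_TT}=0$ at $0$.) This is the case $j=0$ of the eigenvector criterion \cite[Thm.~3.8]{CFG}.

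We then show that this eigenvector condition for all non-null $u$ is equivalent to isotropy.

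If $\ip{\II(u,u)}{\II(u,u)}=\lambda\ip{u}{u}^2$ for all $u$, polarize in the direction $v$. This gives $4\ip{\II(u,u)}{\II(u,v)}=4\lambda\ip{u}{u}\ip{u}{v}$, so $\ip{A_{\II(u,u)}u}{v}=\lambda\ip{u}{u}\ip{u}{v}$. Hence $A_{\II(u,u)}u=\lambda\ip{u}{u}u$.

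Conversely, suppose $A_{\II(u,u)}u=\mu(u)u$ for all non-null $u$. Then $\ip{\II(u,u)}{\II(u,v)}=0$ whenever $v\perp u$. So the quartic $f(u)=\ip{\II(u,u)}{\II(u,u)}$ has $df_u(v)=0$ for every $v\perp u$, which means $f$ is constant on the connected components of each unit pseudo-sphere. A real-analytic argument then gives $f=\lambda\ip{u}{u}^2$ on the open dense set of non-null vectors, and continuity extends this to all of $T_pM_s^n$.

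This analytic step is the main obstacle. The pseudo-spheres are disconnected when $s=1$ or $n-s=1$, and the spacelike and timelike constants must be matched. To handle this, we use that $f-c\ip{u}{u}^2$ vanishes on the open cone over one component. A polynomial vanishing on an open set vanishes identically, so a single constant $\lambda$ serves everywhere.

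For order four, assuming isotropy, we compute the fourth derivatives the same way. The normal components then differ by terms involving $(\overline\nabla^{\,2}\II)(u^4)$ and $\nabla_T\nabla_TT$ corrections, and by the $j=1$ eigenvector criterion the condition becomes that $u$ is an eigenvector of $A_{(\overline\nabla\II)(u^3)}$. Differentiating the isotropy relation along $u$ gives $4\ip{(\overline\nabla\II)(u,u,v)}{\II(u,u)}=(v\lambda)\ip{u}{u}^2$ for $v=u$ directions generalized by Codazzi. Combined with polarization, the eigenvector condition becomes equivalent to $(u\lambda)\ip{u}{u}^2$ being proportional to $\ip{u}{u}\ip{u}{\cdot}$ in the way forced by $d\lambda=0$. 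Again the polynomial identity argument yields $d\lambda=0$, and connectedness gives that $\lambda$ is constant.
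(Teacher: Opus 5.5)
\paragraph{Verdict.} Your third-order half is sound, but the fourth-order half has a genuine gap: the identity linking the $j=1$ eigenvector condition to $d\lambda$ is never derived. On the third-order half: the derivative comparison gives the $j=0$ case of Theorem~\ref{thm:contact-criterion}. Your parenthetical should just say that $\nabla_T\nabla_TT(0)$ is the component of $A_{\II(u,u)}u$ orthogonal to $u$. Your open-cone argument is exactly the case $i=j=0$ of Theorem~\ref{thm:higher-identities}\textup{(ii)}; the paper itself simply cites \cite[Thm.~3.7]{CFG} for this statement.

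\paragraph{The fourth-order gap.} Differentiating isotropy gives only
\[
 2\ip{(\overline\nabla\II)(u,u,z)}{\II(u,u)}=z(\lambda)\ip{u}{u}^2 .
\]
The coefficient is $2$, not $4$. This controls the pairing of $\overline\nabla\II$ with $\II(u,u)$. The $j=1$ eigenvector condition concerns the \emph{other} pairing, $\ip{(\overline\nabla\II)(u^3)}{\II(u,v)}$ with $v\perp u$, and your sketch never moves the derivative from one factor to the other. The sentence about ``$(u\lambda)\ip{u}{u}^2$ being proportional to $\ip{u}{u}\ip{u}{\cdot}$'' does not parse: $u(\lambda)$ is a number, whereas the relevant object is the linear form $v\mapsto v(\lambda)$ on $u^\perp$.

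The missing step is the case $N=1$ of Lemma~\ref{lem:transfer-identity}. Differentiate the polarized relation $\ip{\II(X,X)}{\II(X,Y)}=\lambda\ip{X}{X}\ip{X}{Y}$ along $X$ at $p$, with $\nabla X=\nabla Y=0$ at $p$ and $v\perp u$, and use Codazzi. Equivalently, polarize the quintic identity $2\ip{(\overline\nabla\II)(u^3)}{\II(u,u)}=u(\lambda)\ip{u}{u}^2$ using the total symmetry of $\overline\nabla\II$. Either way,
\[
 \ip{A_{(\overline\nabla\II)(u^3)}u}{v}=-\tfrac12\ip{u}{u}^2\,v(\lambda),\qquad v\perp u .
\]
Assume isotropy, with $\lambda$ smooth (evaluate it on a local unit field). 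Then the $j=1$ condition at every non-null $u$ is equivalent to $d\lambda$ vanishing on every such $u^\perp$. That in turn is equivalent to $d\lambda=0$, since for $n\geq2$ every non-null $v$ is orthogonal to some non-null $u$. This gives both directions at order four, and no further polynomial continuation is needed.

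\paragraph{Fourth derivatives.} Your account of the fourth derivatives is also incorrect, although nothing rests on it once you invoke the criterion. After third-order contact, $\nabla_TT(0)=\nabla_T\nabla_TT(0)=0$. The normal components of $\beta_u''''(0)$ and $\gamma_u''''(0)$ then coincide; in particular the $(\overline\nabla^{\,2}\II)(u^4)$ terms cancel. The discrepancy is the tangential vector $\nabla_T\nabla_T\nabla_TT(0)$. Isotropy gives $\nabla_T(A_{\II(T,T)}T)(0)=\ip{u}{u}\,u(\lambda)\,u$. Hence that tangential vector is the component of $A_{(\overline\nabla\II)(u^3)}u$ orthogonal to $u$, which is how the $j=1$ criterion actually arises.
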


Theorem~\ref{thm:known-low-orders} is
\cite[Thm.~3.7(2)--(3)]{CFG}, extending
\cite[Thm.~4.1(2)--(3)]{ChenLi}. It also covers the indefinite
two-dimensional case, so no additional dimensional hypothesis is
required.

We shall also use the following eigenvector characterization of
one-sided contact.

\begin{theorem}[One-sided eigenvector criterion]
\label{thm:contact-criterion}
Let $k\geq3$ and fix $\varepsilon\in\{1,-1\}$. The immersion has
$\varepsilon$-contact of order $k$ if and only if, for every
$u\in U_\varepsilon M_s^n$ and every $0\leq j\leq k-3$, the vector $u$
is an eigenvector of the shape operator
\[
A_{(\overline\nabla^{\,j}\II)(u^{j+2})}.
\]
Consequently, the immersion has contact of order $k$ if and only if
the same eigenvector condition holds for every unit non-null tangent
vector.
\end{theorem}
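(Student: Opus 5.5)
The plan is to compare the Taylor data of $\gamma_u$ and $\beta_u$ at $s=0$, by induction on the order, for a fixed $u\in U_\varepsilon M_s^n$. The sign $\varepsilon$ is kept fixed throughout; the only place $u$ being non-null enters is through the normalization used in Proposition~\ref{prop:contact-two}. Write $\beta_u(s)=\psi(c(s))$ for a curve $c$ in $M_s^n$ with $c'(0)=u$, and put $T=c'$. Iterating the Gauss and Weingarten formulas gives
\[
\beta_u^{(r)}(0)=\text{(tangent part)}+\text{(normal part)}.
\]
The leading tangent term is $\nabla_T^{r-1}T(0)$. The leading normal term is $(\overline\nabla^{\,r-2}\II)(u^r)$. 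The remaining terms are polynomial in lower covariant derivatives of $T$, in the tensors $\overline\nabla^{\,j}\II$, and in the shape operators $A$. For the geodesic, $\nabla_{\gamma'}\gamma'=0$, so only the terms built from $\II$, its derivatives and $A$ survive.

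The inductive claim is the following. If $\gamma_u$ and $\beta_u$ have contact of order $r-1$, and the eigenvector condition holds for $j\le r-4$, then $\nabla_T^{a}T(0)=0$ for $1\le a\le r-3$. Under that hypothesis, the $r$-th order coincidence reduces to
\[
\nabla_T^{r-2}T(0)=0.
\]
Two facts determine this vector. First, by \eqref{eq:normal-section-derivatives} the tangent component of $\beta_u^{(r)}(0)$ lies in $\R u$. Second, the tangent component of $\gamma_u^{(r)}(0)$ contains the term $-A_{(\overline\nabla^{\,r-3}\II)(u^{r-1})}u$, coming from differentiating the normal part $(\overline\nabla^{\,r-3}\II)(u^{r-1})$ once more via Weingarten. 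The tangent part of $\beta_u^{(r)}(0)$ is that same term plus $\nabla_T^{r-1}T(0)$ plus lower-order tangent terms, which vanish by induction. Hence the difference between the two $r$-th derivatives is governed by the component of $A_{(\overline\nabla^{\,r-3}\II)(u^{r-1})}u$ orthogonal to $u$.

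Constant speed controls the $u$-component. Differentiating $\ip{T}{T}=\varepsilon$ repeatedly, and using the induction hypotheses, shows $\ip{\nabla_T^{a}T}{u}(0)=0$. Because $u$ is non-null, this pins down the $\R u$-part. Therefore the normal section lying in $E(p,u)$ forces $\nabla_T^{a}T(0)\in\R u\cap u^\perp=0$ exactly when the projection of $A_{(\overline\nabla^{\,a-1}\II)(u^{a+1})}u$ onto $u^\perp$ vanishes, that is, exactly when $u$ is an eigenvector. In the other direction, if $u$ is not an eigenvector, the normal-section constraint produces a nonzero tangent correction and contact fails at that order. Running the induction from $k=3$, where the base case is the computation in Proposition~\ref{prop:contact-two}, up to $k$ gives the equivalence for $j\le k-3$.

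The main obstacle is the bookkeeping in the $r$-th derivative: showing that all mixed terms either vanish by the inductive hypothesis or cancel between $\gamma_u$ and $\beta_u$. The plan is to organize both expansions as identical universal polynomials in $\{\nabla_T^{a}T(0)\}$, $\overline\nabla^{\,j}\II$ and $A$, so that only the terms linear in the highest $\nabla_T^{a}T$ differ. This is the Chen--Li / \cite{CFG} computation, now done with $\varepsilon$ fixed. The final clause of the theorem, about contact for every unit non-null vector, follows by applying the result to $\varepsilon=1$ and to $\varepsilon=-1$.
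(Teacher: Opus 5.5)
Your proposal follows the same Chen--Li successive-derivative route the paper cites, but as written it has a genuine gap.

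\textbf{What is correct.} Once $\nabla_T^aT(0)=0$ for $a\le r-2$, one has $\beta_u^{(r)}(0)-\gamma_u^{(r)}(0)=\nabla_T^{r-1}T(0)$. Then \eqref{eq:normal-section-derivatives} together with constant speed shows that order-$r$ contact at $(p,u)$ is equivalent to $\gamma_u^{(r)}(0)^{\top}\in\R u$. Your indices drift here: the order-$r$ condition is $\nabla_T^{r-1}T(0)=0$, and $\nabla_T^aT(0)$ is governed by $\overline\nabla^{\,a-2}\II$, not by $\overline\nabla^{\,a-1}\II$.

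\textbf{The gap.} You then identify the $u^\perp$-component of $\gamma_u^{(r)}(0)^{\top}$ with that of $-A_{(\overline\nabla^{\,r-3}\II)(u^{r-1})}u$, claiming that all mixed terms vanish by induction or cancel between $\gamma_u$ and $\beta_u$. They do cancel from the difference. They do not cancel from $\gamma_u^{(r)}(0)^{\top}$ itself, and the lower-order eigenvector conditions at the single vector $u$ do not kill them. Already for $r=4$, differentiating $-A_{\II(T,T)}T$ along the geodesic gives, for $v\perp u$,
\[
\ip{\gamma_u^{(4)}(0)^{\top}}{v}
=-\ip{(\overline\nabla\II)(u,u,v)}{\II(u,u)}
-2\ip{(\overline\nabla\II)(u^3)}{\II(u,v)}.
\]
Take a hypersurface with $\II=h\,\nu$ and let $u$ be a principal direction. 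Then $u$ is an eigenvector of every shape operator. Yet the first term equals $-\ip{\nu}{\nu}\,h(u,u)(\overline\nabla h)(u,u,v)$. This is nonzero for the parallel direction wherever the parallel principal curvature is nonzero and varies along the meridian. Generic points of a torus of revolution in $\E^3$ are an example, and Lorentzian surfaces of revolution about the timelike axis in $\E^3_1$ behave the same way. So the direction-by-direction equivalence your induction asserts, for a fixed $u$, is false from order four on.

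\textbf{What is missing.} You need to use the hypothesis on the whole open set $U_\varepsilon M_s^n$, not at one vector. Because the lower-order conditions hold for every unit vector of sign $\varepsilon$ near $p$, an identity such as $\ip{\II(X,X)}{\II(X,Y)}=0$ can be differentiated along $X$. Here $X,Y$ are orthonormal local fields with $\ip{X}{X}=\varepsilon$ and $\nabla X=\nabla Y=0$ at $p$. Together with Codazzi this gives
\[
\ip{\II(u,u)}{(\overline\nabla\II)(u,u,v)}=-\ip{(\overline\nabla\II)(u^3)}{\II(u,v)}.
\]
Substituting, $\ip{\gamma_u^{(4)}(0)^{\top}}{v}=-\ip{A_{(\overline\nabla\II)(u^3)}u}{v}$. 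The coefficient is $1-2=-1$; it does not come from the single Weingarten term you isolate.

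At higher orders every extra term must be transferred in this way, using the Ricci commutation \eqref{eq:Ricci-commutation} when arguments are reordered. One must then check that the resulting total coefficient of $\ip{A_{(\overline\nabla^{\,r-3}\II)(u^{r-1})}u}{v}$ never vanishes. This transfer step, carried out inside one causal family, is the content of the Chen--Li lemmas the paper invokes through \cite[Thm.~4.12]{GomezThesis}. It is also the real reason fixing $\varepsilon$ is legitimate: $U_\varepsilon M_s^n$ is open, so the differentiations stay within that family. The normalization used in Proposition~\ref{prop:contact-two} is not what makes this work.
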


The one-sided formulation is Theorem~4.12 of
\cite{GomezThesis}. As noted there, its proof follows the
successive-derivative argument of Chen and Li
\cite[Lemmas~6.2 and~6.3]{ChenLi}, with the causal sign fixed
throughout. The corresponding bilateral pseudo-Riemannian criterion is
\cite[Thm.~3.8]{CFG}. We shall therefore use
Theorem~\ref{thm:contact-criterion} without reproducing that calculation.

\begin{remark}
The eigenvector condition in Theorem~\ref{thm:contact-criterion} is not a
new replacement for the Chen--Li criterion.  Its role here is different:
the same condition is isolated on one causal unit bundle, so that the
question becomes whether information on $U_+M_s^n$ determines $U_-M_s^n$, and
conversely.
\end{remark}

\section{One-sided higher-order identities}\label{sec:higher-identities}

We first isolate the transfer calculation underlying the original proof.
For $r\geq2$ and $\varepsilon\in\{1,-1\}$, let
$\mathcal P_r^\varepsilon$ denote the following assertion: whenever
$i,j\geq0$, $i+j\leq r-3$, $0\leq a\leq j+1$, and $u,v$ are orthonormal
at the same point with $\langle u,u\rangle=\varepsilon$, one has
\begin{equation*}
 \langle(\overline\nabla^{\,i}\II)(u^{i+2}),
 (\overline\nabla^{\,j}\II)(u^a,v,u^{j-a+1})\rangle=0.
\end{equation*}
Thus $\mathcal P_2^\varepsilon$ is vacuous.  The terminal position
$a=j+1$ is included because it occurs when a diagonal expression is
differentiated.

\begin{lemma}[Transfer identity]\label{lem:transfer-identity}
Fix $N\geq0$ and $\varepsilon\in\{1,-1\}$, and assume that
$\mathcal P_{N+2}^\varepsilon$ holds.  If $i+j=N$, $0\leq a\leq j+1$,
and $u,v$ are orthonormal with
$\langle u,u\rangle=\varepsilon$, then
\begin{equation}\label{eq:transfer-identity}
\begin{aligned}
 &\langle(\overline\nabla^{\,i}\II)(u^{i+2}),
 (\overline\nabla^{\,j}\II)(u^a,v,u^{j-a+1})\rangle\\
 &\qquad=(-1)^j
 \langle(\overline\nabla^{\,N}\II)(u^{N+2}),\II(u,v)\rangle.
\end{aligned}
\end{equation}
\end{lemma}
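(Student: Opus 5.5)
The plan is to prove \eqref{eq:transfer-identity} by induction on $j$, with $N$ fixed, moving one covariant derivative at a time from the second factor to the first. Each move produces a sign $-1$. The induction is anchored at $j=0$, where $i=N$ and $a\in\{0,1\}$. In that case the left side is $\langle(\overline\nabla^{\,N}\II)(u^{N+2}),\II(u,v)\rangle$ or $\langle(\overline\nabla^{\,N}\II)(u^{N+2}),\II(v,u)\rangle$, and these agree by symmetry of $\II$.

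\textbf{Inductive step for $1\le j$ and $a\le j$.} Fix $p$, and extend $u,v$ to local vector fields that are orthonormal near $p$ with $\nabla u=\nabla v=0$ at $p$; for instance, parallel translate along radial geodesics. Then $\langle u,u\rangle=\varepsilon$ on the neighbourhood. The pair $(i,j-1)$ satisfies $i+(j-1)=N-1\le (N+2)-3$, so $\mathcal P_{N+2}^\varepsilon$ gives, pointwise near $p$,
\[
\langle(\overline\nabla^{\,i}\II)(u^{i+2}),(\overline\nabla^{\,j-1}\II)(u^{a},v,u^{j-a})\rangle=0 ,
\]
and this is admissible because $a\le j=(j-1)+1$. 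Differentiate in the direction $u$ at $p$, using metric compatibility of $\nabla^\perp$ and the recursive definition of $\overline\nabla^{\,m}\II$. All terms containing $\nabla u$ or $\nabla v$ vanish at $p$, and the new differentiating argument enters the last slot. This gives
\[
\langle(\overline\nabla^{\,i+1}\II)(u^{i+3}),(\overline\nabla^{\,j-1}\II)(u^a,v,u^{j-a})\rangle+\langle(\overline\nabla^{\,i}\II)(u^{i+2}),(\overline\nabla^{\,j}\II)(u^a,v,u^{j-a+1})\rangle=0 .
\]
The first term has indices $(i+1,j-1)$ with $a\le (j-1)+1$, so by the induction hypothesis it equals $(-1)^{j-1}\langle(\overline\nabla^{\,N}\II)(u^{N+2}),\II(u,v)\rangle$. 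Hence the second term equals $(-1)^j$ times the same quantity, which is the claim.

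\textbf{Terminal position $a=j+1$.} This is the step I expect to be the main obstacle, since $v$ sits in the differentiating slot and the previous argument does not apply directly. The plan is to reduce it to position $a=j$.

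For $j=1$, Codazzi makes $\overline\nabla\II$ totally symmetric, so $(\overline\nabla\II)(u,u,v)=(\overline\nabla\II)(u,v,u)$ and we are back in the case already treated.

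For $j\ge2$, apply \eqref{eq:Ricci-commutation} with $a=j+1$ and take the inner product with $\eta=(\overline\nabla^{\,i}\II)(u^{i+2})$. Two error terms appear, and both should vanish.

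\begin{enumerate}
\item \emph{Normal curvature term.} Put $\xi=(\overline\nabla^{\,j-2}\II)(u^{j})$. By the Ricci equation,
\[
\langle R^\normal(v,u)\xi,\eta\rangle=\langle[A_\xi,A_\eta]v,u\rangle=\langle v,[A_\eta,A_\xi]u\rangle .
\]
Apply $\mathcal P_{N+2}^\varepsilon$ with second index $0$, using $i\le N-2$ and $j-2\le N-2$. It shows that $\langle A_\eta u,w\rangle=0$ and $\langle A_\xi u,w\rangle=0$ for every $w\perp u$. Thus $u$ is a common eigenvector of $A_\xi$ and $A_\eta$, so $[A_\eta,A_\xi]u=0$ and the term vanishes.
\item \emph{Tangential curvature term.} Expand $R(v,u)u$ in an orthonormal basis adapted to $u$. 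Its $u$-component is zero, because $\langle R(v,u)u,u\rangle=0$. Each remaining component gives a summand
\[
\langle\eta,(\overline\nabla^{\,j-2}\II)(u^t,w,u^{j-1-t})\rangle ,\qquad w\perp u,
\]
possibly after rescaling $w$ to unit length. These vanish by $\mathcal P_{N+2}^\varepsilon$, since $i+(j-2)\le N-1$ and $t\le j-1$.
\end{enumerate}

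Consequently the $a=j+1$ expression equals the $a=j$ expression, and the latter is handled by the inductive step above.
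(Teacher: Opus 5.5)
Your proposal is correct and follows essentially the paper's argument. It uses the same ingredients: differentiating the order-$(N-1)$ vanishing identity along extensions with vanishing covariant derivative at $p$, Codazzi at the first terminal level, and the Ricci commutation identity, where the normal-curvature term is killed by the common-eigenvector argument and the tangential terms by $\mathcal P_{N+2}^\varepsilon$ (using $R(v,u)u\perp u$). The only difference is organizational. You run a single induction on $j$ that covers every position, including $a=0$ directly, whereas the paper first iterates down to the terminal position and then proves the terminal identity by a separate induction on $\ell$, treating $a=0$ by symmetry.
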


\begin{proof}
For $N=0$, formula \eqref{eq:transfer-identity} is just the symmetry of
$\II$.  Assume $N\geq1$ and set
\begin{equation*}
 B_{i,j,a}:=
 \langle(\overline\nabla^{\,i}\II)(u^{i+2}),
 (\overline\nabla^{\,j}\II)(u^a,v,u^{j-a+1})\rangle.
\end{equation*}
Choose local orthonormal extensions $X,Y$ of $u,v$ such that
$\nabla_ZX=\nabla_ZY=0$ at $p$ for every $Z\in T_pM_s^n$.
If $0<a\leq j$, differentiating in the $X$-direction gives
\begin{equation}\label{eq:transfer-step}
\begin{aligned}
 B_{i,j,a}
 &=X\!\left(
 \langle(\overline\nabla^{\,i}\II)(X^{i+2}),
 (\overline\nabla^{\,j-1}\II)(X^a,Y,X^{j-a})\rangle\right)_p\\
 &\quad-
 \langle(\overline\nabla^{\,i+1}\II)(u^{i+3}),
 (\overline\nabla^{\,j-1}\II)(u^a,v,u^{j-a})\rangle.
\end{aligned}
\end{equation}
The differentiated function vanishes by
$\mathcal P_{N+2}^\varepsilon$, since the total derivative order in it is
$N-1$.  Iterating \eqref{eq:transfer-step} therefore yields
\begin{equation}\label{eq:transfer-intermediate}
 B_{i,j,a}=(-1)^{j-a+1}
 \langle
 (\overline\nabla^{\,N-a+1}\II)(u^{N-a+3}),
 (\overline\nabla^{\,a-1}\II)(u^a,v)\rangle.
\end{equation}

We next prove, for $1\leq\ell\leq N+1$ and
$I+\ell-1=N$, that
\begin{equation}\label{eq:terminal-transfer}
 \langle(\overline\nabla^{\,I}\II)(u^{I+2}),
 (\overline\nabla^{\,\ell-1}\II)(u^\ell,v)\rangle
 =(-1)^{\ell-1}
 \langle(\overline\nabla^{\,N}\II)(u^{N+2}),\II(u,v)\rangle.
\end{equation}
For $\ell=1$ this is immediate.  For $\ell=2$, Codazzi permits
the last argument $v$ to be interchanged with one of the copies of $u$.
Differentiating the lower-order identity
$\langle(\overline\nabla^{\,I}\II)(X^{I+2}),\II(X,Y)\rangle=0$ in the
$X$-direction then gives \eqref{eq:terminal-transfer} with the required
minus sign.

Let $\ell\geq3$ and assume \eqref{eq:terminal-transfer} has been proved
with $\ell-1$ in place of $\ell$.  The Ricci commutation identity
\eqref{eq:Ricci-commutation} compares
$(\overline\nabla^{\,\ell-1}\II)(u^\ell,v)$ with
$(\overline\nabla^{\,\ell-1}\II)(u^{\ell-1},v,u)$.  After pairing with
$(\overline\nabla^{\,I}\II)(u^{I+2})$, all tangent-curvature terms vanish by
$\mathcal P_{N+2}^\varepsilon$, because their total derivative order is
$N-2$ and the curvature symmetries give $R(v,u)u\perp u$.

The normal-curvature term vanishes as well.  Put
$\xi=(\overline\nabla^{\,\ell-3}\II)(u^{\ell-1})$ and
$\eta=(\overline\nabla^{\,I}\II)(u^{I+2})$.  The lower-order identities with second factor
$\II(u,w)$ show that $A_\xi u=\alpha u$ and $A_\eta u=\beta u$ for some
real numbers $\alpha$ and $\beta$.  Since the shape operators are
self-adjoint, the Ricci equation gives
\begin{equation*}
\begin{aligned}
 \langle R^\perp(v,u)\xi,\eta\rangle
 &=\langle[A_\xi,A_\eta]v,u\rangle\\
 &=\alpha\langle A_\eta v,u\rangle
   -\beta\langle A_\xi v,u\rangle=0.
\end{aligned}
\end{equation*}
Consequently,
\begin{equation*}
\begin{aligned}
 &\langle(\overline\nabla^{\,I}\II)(u^{I+2}),
 (\overline\nabla^{\,\ell-1}\II)(u^\ell,v)\rangle\\
 &\quad=
 \langle(\overline\nabla^{\,I}\II)(u^{I+2}),
 (\overline\nabla^{\,\ell-1}\II)(u^{\ell-1},v,u)\rangle\\
 &\quad=-
 \langle(\overline\nabla^{\,I+1}\II)(u^{I+3}),
 (\overline\nabla^{\,\ell-2}\II)(u^{\ell-1},v)\rangle.
\end{aligned}
\end{equation*}
Here the last equality is obtained by differentiating, in the
$X$-direction, the lower-order identity
$\langle(\overline\nabla^{\,I}\II)(X^{I+2}),
(\overline\nabla^{\,\ell-2}\II)(X^{\ell-1},Y)\rangle=0$.
The induction hypothesis applied to the last scalar product proves
\eqref{eq:terminal-transfer}.

For $1\leq a\leq j$, combining
\eqref{eq:transfer-intermediate} and
\eqref{eq:terminal-transfer} gives
\eqref{eq:transfer-identity}.  The case $a=0$ reduces to $a=1$ by the
symmetry of the first two arguments of $\overline\nabla^{\,j}\II$, while the case
$a=j+1$ is exactly \eqref{eq:terminal-transfer} with
$\ell=j+1$ and $I=i$.  This completes the proof.
\end{proof}

\begin{theorem}[One-sided higher-order identities]
\label{thm:higher-identities}
Let $k\geq3$ and fix $\varepsilon\in\{1,-1\}$.  Assume that $M_s^n$ has
$\varepsilon$-contact of order $k$.  Then the following assertions hold.
\begin{enumerate}
 \item If $i,j\geq0$, $i+j\leq k-3$, $0\leq a\leq j+1$, and $u,v$ are
 orthonormal with $\langle u,u\rangle=\varepsilon$, then
 \begin{equation*}
  \langle(\overline\nabla^{\,i}\II)(u^{i+2}),
  (\overline\nabla^{\,j}\II)(u^a,v,u^{j-a+1})\rangle=0.
 \end{equation*}
 \item If $i,j\geq0$ and $i+j\leq k-3$, there exists a smooth function
 $\lambda_{ij}$ on $M_s^n$ such that, for every $w\in TM_s^n$,
 \begin{equation}\label{eq:scalar-polynomial}
 \langle(\overline\nabla^{\,i}\II)(w^{i+2}),(\overline\nabla^{\,j}\II)(w^{j+2})\rangle
 =
 \begin{cases}
  \lambda_{ij}\langle w,w\rangle^{(i+j+4)/2},&i+j\text{ even},\\
  0,&i+j\text{ odd}.
 \end{cases}
 \end{equation}
 \item If $k\geq4$ and $i+j\leq k-4$, then $\lambda_{ij}$ is constant.
\end{enumerate}
\end{theorem}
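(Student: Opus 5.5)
We prove (i) by induction on $N=i+j$, showing that $\mathcal P_{N+3}^\varepsilon$ holds for $0\le N\le k-3$. Then (ii) and (iii) are deduced from (i) by differentiation and a polynomial extension argument.

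\emph{Step 1: assertion (i).} Suppose $\mathcal P_{N+2}^\varepsilon$ holds; for $N=0$ this is vacuous. Fix $i+j=N\le k-3$. Lemma~\ref{lem:transfer-identity} reduces every scalar product in (i) to
\[
(-1)^j\langle(\overline\nabla^{\,N}\II)(u^{N+2}),\II(u,v)\rangle .
\]
By \eqref{eq:shape-II}, this equals $(-1)^j\langle A_\xi u,v\rangle$ with $\xi=(\overline\nabla^{\,N}\II)(u^{N+2})$. Since $N\le k-3$, Theorem~\ref{thm:contact-criterion} shows that $u$ is an eigenvector of $A_\xi$. Hence $A_\xi u\in\R u$ is orthogonal to $v$, and the product vanishes. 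This establishes $\mathcal P_{N+3}^\varepsilon$, and after $k-2$ steps we reach $\mathcal P_k^\varepsilon$, which is exactly (i).

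\emph{Step 2: scalar identity on the pseudo-sphere.} Put
\[
f_{ij}(w)=\langle(\overline\nabla^{\,i}\II)(w^{i+2}),(\overline\nabla^{\,j}\II)(w^{j+2})\rangle,
\]
a homogeneous polynomial of degree $i+j+4$ on $T_pM_s^n$. Restrict $f_{ij}$ to the unit pseudo-sphere $U_\varepsilon M_s^n\cap T_pM$. Its derivative at $u$ in a direction $v\perp u$ is the sum of the terms obtained by placing $v$ once in each slot. Each such term is an instance of (i), using the symmetry of the first two slots and the case $a=j+1$ for a terminal slot, possibly with the roles of $i$ and $j$ exchanged. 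So every term vanishes.

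The pseudo-sphere $\{\langle w,w\rangle=\varepsilon\}$ is connected when the relevant index allows it. When it is not (a two-sheeted hyperboloid), the two components are exchanged by $w\mapsto -w$, and $f_{ij}(-w)=(-1)^{i+j}f_{ij}(w)$. Therefore $f_{ij}$ equals a constant $c$ on one component and $(-1)^{i+j}c$ on the other.

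\emph{Step 3: polynomial extension (the main obstacle).} First let $i+j$ be even. By homogeneity, $f_{ij}(w)=c\,(\varepsilon\langle w,w\rangle)^{(i+j+4)/2}$ on the open cone $\{\varepsilon\langle w,w\rangle>0\}$. Both sides are polynomials, so they agree on all of $T_pM$; this is where $i+j$ even is used, since the exponent is then an integer. Set $\lambda_{ij}(p)=c\,\varepsilon^{(i+j+4)/2}$.

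Now let $i+j$ be odd. Then $f_{ij}(w)=\pm c\,(\varepsilon\langle w,w\rangle)^{(i+j+4)/2}$ on the cone, and the exponent is a half-integer. A polynomial agreeing on an open set with $c$ times a non-polynomial power of the quadratic form forces $c=0$. To see this, restrict to a non-degenerate 2-plane through $u$: there $\langle w,w\rangle^{(i+j+4)/2}$ is not polynomial, since an odd power of the square root of an irreducible quadratic is not a polynomial. One must also check that the sign ambiguity between the two sheets does not spoil the argument; on a single sheet it is a constant, so it does not. This step, with its care about the connectivity of the pseudo-spheres when $s=1$ or $s=n-1$, is the delicate point. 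Smoothness of $\lambda_{ij}$ in $p$ follows by evaluating at a smooth local unit field.

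\emph{Step 4: assertion (iii).} Let $i+j\le k-4$ with $i+j$ even. Take a unit extension $X$ of $u$ with $\nabla X=0$ at $p$, and differentiate $f_{ij}(X)=\lambda_{ij}\varepsilon^{(i+j+4)/2}$ in a direction $Z$. The left side becomes
\[
\langle(\overline\nabla^{\,i+1}\II)(u^{i+2},Z),\cdot\rangle+\langle\cdot,(\overline\nabla^{\,j+1}\II)(u^{j+2},Z)\rangle .
\]
Decompose $Z=\varepsilon\langle Z,u\rangle u+z$ with $z\perp u$. The $z$-parts vanish by (i), whose orders are within range since $i+j+1\le k-3$. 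The $u$-part equals $\langle Z,u\rangle\varepsilon\,(f_{i+1,j}+f_{i,j+1})(u)$, and both of these vanish by (ii) because $i+j+1$ is odd. Hence $Z\lambda_{ij}=0$, and since $M_s^n$ is connected, $\lambda_{ij}$ is constant.
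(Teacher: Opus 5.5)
Your proof is correct and is essentially the paper's. Part (i) uses the same induction combining Lemma~\ref{lem:transfer-identity} with Theorem~\ref{thm:contact-criterion}, and part (ii) uses the same constancy on a component of $\Sigma_p^\varepsilon$ followed by polynomial continuation, with the odd case excluded by a multiplicity count on a non-degenerate plane. In that multiplicity count, the property you need is that $\langle w,w\rangle$ is square-free on the plane, not that it is irreducible. In the disconnected cases, every non-degenerate plane through $u$ is Lorentzian, and there the form factors as $(x-y)(x+y)$ over $\mathbb R$.

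The only real variation is in (iii). You kill the $u$-component of $Z$ using the vanishing of $f_{i+1,j}$ and $f_{i,j+1}$, which holds by (ii) because their total order is odd. The paper instead differentiates only along vectors $z$ of the opposite causal type, each of which is orthogonal to some $u$ with $\langle u,u\rangle=\varepsilon$, and concludes because $d\lambda_{ij}$ vanishes on a nonempty open cone. Your version does not use indefiniteness at this step, while the paper's does not need (ii) at the next order.
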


\begin{proof}
We first prove \textup{(i)} by the induction used in the dissertation.
The assertion $\mathcal P_2^\varepsilon$ is vacuous.  Suppose that
$3\leq r\leq k$ and that $\mathcal P_{r-1}^\varepsilon$ has been
established.  Put $N=r-3$.  For $i+j=N$, the transfer identity gives
\begin{equation*}
 \langle(\overline\nabla^{\,i}\II)(u^{i+2}),
 (\overline\nabla^{\,j}\II)(u^a,v,u^{j-a+1})\rangle
 =(-1)^j\langle(\overline\nabla^{\,N}\II)(u^{N+2}),\II(u,v)\rangle.
\end{equation*}
The right-hand side is
$(-1)^j\langle A_{(\overline\nabla^{\,N}\II)(u^{N+2})}u,v\rangle$ and therefore vanishes by
Theorem~\ref{thm:contact-criterion}.  Thus
$\mathcal P_r^\varepsilon$ holds.  Induction gives
$\mathcal P_k^\varepsilon$, which is precisely \textup{(i)}.

Fix $p\in M_s^n$ and $i+j\leq k-3$, and define the homogeneous polynomial
\begin{equation*}
 F_{ij}(w)=
 \langle(\overline\nabla^{\,i}\II)(w^{i+2}),(\overline\nabla^{\,j}\II)(w^{j+2})\rangle,
 \qquad w\in T_pM_s^n.
\end{equation*}
Let
\begin{equation*}
 \Sigma_p^\varepsilon=\{w\in T_pM_s^n:\langle w,w\rangle=\varepsilon\}.
\end{equation*}
If $u\in\Sigma_p^\varepsilon$ and $v\in u^\perp$, differentiation gives
\begin{equation}\label{eq:dFij}
\begin{aligned}
 d(F_{ij})_u(v)
 &=\sum_{a=0}^{i+1}
 \langle(\overline\nabla^{\,i}\II)(u^a,v,u^{i-a+1}),(\overline\nabla^{\,j}\II)(u^{j+2})\rangle\\
 &\quad+\sum_{a=0}^{j+1}
 \langle(\overline\nabla^{\,i}\II)(u^{i+2}),(\overline\nabla^{\,j}\II)(u^a,v,u^{j-a+1})\rangle=0.
\end{aligned}
\end{equation}
For unit non-null $v$ this follows from \textup{(i)}, after interchanging
the two factors in the first sum; linearity and continuity give it for
every $v\in u^\perp$.  Hence $F_{ij}$ is constant on each connected
component $A$ of $\Sigma_p^\varepsilon$.

Assume first that $i+j$ is even and put
$m=(i+j+4)/2$.  Let $C$ be the value of $F_{ij}$ on $A$ and set
$\lambda_{ij}(p)=C\varepsilon^m$.  By homogeneity,
\begin{equation}\label{eq:even-cone}
 F_{ij}(w)=\lambda_{ij}(p)\langle w,w\rangle^m
\end{equation}
whenever the normalization of $w$ lies in $A$.  Fix one such vector $u$
and take an arbitrary $z\in T_pM_s^n$.  For sufficiently small $t$,
$u_t=u+tz$ remains in the same open cone, and
\eqref{eq:even-cone} holds for $u_t$.  Both sides are polynomials in $t$
of degree at most $i+j+4$; equality of their leading coefficients gives
\eqref{eq:even-cone} with $z$ in place of $u$.  Since $z$ is arbitrary,
the even case of \eqref{eq:scalar-polynomial} holds on all of $T_pM_s^n$.

Suppose now that $i+j$ is odd and set $d=i+j+4$, which is odd.  On the
open cone over $A$, homogeneity gives
\begin{equation}\label{eq:odd-cone}
 F_{ij}(w)=C\bigl(\varepsilon\langle w,w\rangle\bigr)^{d/2}.
\end{equation}
Choose $u\in A$ and a unit non-null vector $z\perp u$.  Then
$\operatorname{span}\{u,z\}$ is non-degenerate.  After substituting
$u_t=u+tz$ in \eqref{eq:odd-cone} and squaring, one obtains, for small
$t$,
\begin{equation}\label{eq:odd-square}
 P(t)^2=C^2Q(t)^d,
 \qquad
 P(t)=F_{ij}(u_t),
 \quad
 Q(t)=\varepsilon\langle u_t,u_t\rangle.
\end{equation}
The quadratic polynomial $Q$ has two simple roots over $\mathbb C$.
If $C\neq0$, every root of $Q$ would occur in the right-hand side of
\eqref{eq:odd-square} with the odd multiplicity $d$, whereas every root
of the square $P^2$ has even multiplicity.  Thus $C=0$.  The polynomial $F_{ij}$ vanishes on a
nonempty open cone and hence vanishes identically.  This proves
\textup{(ii)}; in the odd case we take $\lambda_{ij}=0$.  Smoothness in
the even case follows by evaluating \eqref{eq:scalar-polynomial} on a
local unit field.

Finally, let $k\geq4$, let $i+j\leq k-4$, and suppose $i+j$ is even.
Put $m=(i+j+4)/2$.  Fix $p\in M_s^n$ and let $z\in T_pM_s^n$ range over the
nonempty open cone with causal sign opposite to $\varepsilon$.  Since the
metric is indefinite, the non-degenerate space $z^\perp$ contains a unit
vector $u$ satisfying
$\langle u,u\rangle=\varepsilon$.  Choose a local unit field $X$ with
$X_p=u$ and $\nabla_zX=0$ at $p$.  Differentiating
\eqref{eq:scalar-polynomial} along $z$ gives
\begin{equation*}
\begin{aligned}
 \varepsilon^m z(\lambda_{ij})
 &=\langle(\overline\nabla^{\,i+1}\II)(u^{i+2},z),(\overline\nabla^{\,j}\II)(u^{j+2})\rangle\\
 &\quad+\langle(\overline\nabla^{\,i}\II)(u^{i+2}),(\overline\nabla^{\,j+1}\II)(u^{j+2},z)\rangle=0.
\end{aligned}
\end{equation*}
Both terms vanish by \textup{(i)}, because
$i+j+1\leq k-3$; the statement for a non-unit $z$ follows by linearity.
Thus the linear form $d\lambda_{ij}|_p$ vanishes on a nonempty open cone,
and therefore vanishes identically.  Since $p$ is arbitrary and $M_s^n$ is
connected, $\lambda_{ij}$ is constant.  When $i+j$ is odd there is
nothing to prove.  This establishes \textup{(iii)}.
\end{proof}

\begin{remark}
Lemma~\ref{lem:transfer-identity} isolates the calculation represented
by formulas~(4.37)--(4.40) of \cite{GomezThesis}, and
Theorem~\ref{thm:higher-identities} is a sharpened formulation of
\cite[Thm.~4.22]{GomezThesis}.  The terminal position $a=j+1$, needed in
\eqref{eq:dFij}, is made explicit here and follows from the same Ricci
commutation used in the original proof.  Apart from this clarification,
the transfer induction and the even--odd polynomial argument are
unchanged.
\end{remark}

\begin{remark}[Finite order versus geodesic normal sections]
Under the stronger hypothesis of geodesic normal sections, Miura proved
all-order counterparts of the orthogonality, parity and constancy properties
above; see \cite[Lemmas~3.4--3.6]{MiuraGNS}.  His
\cite[Lemma~3.3]{MiuraGNS} also relates the spacelike, timelike and full
geodesic-normal-section conditions.  Theorem~\ref{thm:higher-identities} is
instead a finite-order statement: only contact through order $k$ is assumed,
and only the tensors with total derivative order at most $k-3$ are
controlled.
\end{remark}

\section{Causal invariance}\label{sec:causal-invariance}

We now carry out the original induction on the contact order.

\begin{theorem}[Causal invariance of contact]\label{thm:causal-invariance}
Let $\psi\colon M_s^n\to\E^{n+d}_\nu$ be an isometric immersion
with $0<s<n$, and let $k\geq2$.  The following assertions are equivalent:
\begin{enumerate}
 \item $M_s^n$ has contact of order $k$;
 \item $M_s^n$ has spacelike contact of order $k$;
 \item $M_s^n$ has timelike contact of order $k$.
\end{enumerate}
In particular,
\begin{equation}\label{eq:causal-invariance}
 \cnum(M_s^n)=\cplus(M_s^n)=\cminus(M_s^n).
\end{equation}
\end{theorem}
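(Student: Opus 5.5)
By definition, (i) implies (ii) and (iii). So it suffices to show that $\varepsilon$-contact of order $k$ implies $(-\varepsilon)$-contact of order $k$ for each fixed $\varepsilon\in\{1,-1\}$. Then \eqref{eq:causal-invariance} follows from \eqref{eq:intro-minimum}. For $k=2$ everything is covered by Proposition~\ref{prop:contact-two}. For $k\geq3$ we argue by induction on $k$ using Theorem~\ref{thm:contact-criterion}. It is enough to show that, assuming $\varepsilon$-contact of order $k$, every $u\in U_{-\varepsilon}M_s^n$ is an eigenvector of $A_{(\overline\nabla^{\,j}\II)(u^{j+2})}$ for $0\leq j\leq k-3$. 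By the induction hypothesis (applied to order $k-1$, which $\varepsilon$-contact of order $k$ implies), $(-\varepsilon)$-contact of order $k-1$ already holds. So only the top index $j=N:=k-3$ remains.

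\textbf{Reduction to a polarized identity.} Fix $p$, set $N=k-3$, and consider the polynomial $F_{N,0}(w)=\langle(\overline\nabla^{\,N}\II)(w^{N+2}),\II(w,w)\rangle$ on $T_pM_s^n$. By Theorem~\ref{thm:higher-identities}(ii), with $i=N$ and $j=0$ (allowed since $i+j\leq k-3$), it satisfies $F_{N,0}(w)=\lambda\langle w,w\rangle^{(N+4)/2}$, or it vanishes identically. This formula holds on all of $T_pM_s^n$, including the opposite causal cone. Now take $u$ with $\langle u,u\rangle=-\varepsilon$ and $v\perp u$, and differentiate $F_{N,0}$ at $u$ in direction $v$. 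The right-hand side has zero derivative because $\langle u,v\rangle=0$. The left-hand side expands into a sum of terms of the shape appearing in \eqref{eq:dFij}.

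\textbf{Handling the terms.} The terms where $v$ enters $\II(w,w)$ give $2\langle(\overline\nabla^{\,N}\II)(u^{N+2}),\II(u,v)\rangle$, which is exactly the quantity we want to vanish. The terms where $v$ enters $\overline\nabla^{\,N}\II$ have the form $\langle(\overline\nabla^{\,N}\II)(u^a,v,u^{N-a+1}),\II(u,u)\rangle$. We rewrite these with the transfer machinery of Lemma~\ref{lem:transfer-identity}, now applied on the $-\varepsilon$ side. The hypothesis $\mathcal P^{-\varepsilon}_{N+2}=\mathcal P^{-\varepsilon}_{k-1}$ is needed there. It follows from Theorem~\ref{thm:higher-identities}(i) applied to the $(-\varepsilon)$-contact of order $k-1$ supplied by the induction. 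Its proof is purely algebraic for a fixed causal sign, since the extensions $X,Y$ can be chosen with the same causal character. The lemma, with $i=0$, $j=N$, gives each such term as $(-1)^N\langle(\overline\nabla^{\,N}\II)(u^{N+2}),\II(u,v)\rangle$. The derivative identity then becomes $(2+(N+2)(-1)^N)\,\langle(\overline\nabla^{\,N}\II)(u^{N+2}),\II(u,v)\rangle=0$.

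The coefficient $2+(N+2)(-1)^N$ is nonzero except when $N$ is odd and $N+2=2$, which is impossible. Hence $\langle A_{(\overline\nabla^{\,N}\II)(u^{N+2})}u,v\rangle=0$ for all $v\perp u$, so $u$ is an eigenvector, and Theorem~\ref{thm:contact-criterion} gives $(-\varepsilon)$-contact of order $k$.

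The main obstacle is the bookkeeping in this last step. We must check that the transfer lemma applies at the terminal positions $a=0$ and $a=N+1$ with the sign conventions as stated, and that no term of total order $N+1$ is needed. If the coefficient count turns out wrong, we would instead transfer every term to $\langle(\overline\nabla^{\,N}\II)(u^{N+2}),\II(u,v)\rangle$ and recompute the signed multiplicity directly.
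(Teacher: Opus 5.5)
Your proposal is correct and follows the paper's proof essentially step for step: induction on $k$, differentiating the diagonal polynomial $\langle(\overline\nabla^{\,N}\II)(w^{N+2}),\II(w,w)\rangle$ from Theorem~\ref{thm:higher-identities}\textup{(ii)} at a unit vector of the missing causal sign, converting each of the $N+2$ resulting terms by Lemma~\ref{lem:transfer-identity} with $i=0$ (using the orthogonality identities supplied by the order-$(k-1)$ induction hypothesis), and using that $(N+2)(-1)^N+2\neq0$. The differences are cosmetic: you absorb $k=3$ into the general step instead of passing through isotropy and Theorem~\ref{thm:known-low-orders}; and because the transfer lemma is stated for orthonormal pairs, you should first take $v$ unit non-null in $u^\perp$ and then extend to all of $u^\perp$ by linearity, as the paper does.
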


\begin{proof}
Full contact plainly implies each one-sided condition.  We prove that
spacelike contact of order $k$ implies full contact; the timelike proof is
identical.

The case $k=2$ is Proposition~\ref{prop:contact-two}.  If $k=3$,
Theorem~\ref{thm:higher-identities}\textup{(ii)}, with $i=j=0$, shows
that \eqref{eq:isotropy} holds at every point, with
$\lambda=\lambda_{00}$.  Thus $M_s^n$ is isotropic, and
Theorem~\ref{thm:known-low-orders} yields full contact of order three.

Let $k>3$ and assume that the conclusion is true for order $k-1$.
Suppose that $M_s^n$ has spacelike contact of order $k$.  It then has
spacelike contact of order $k-1$, and the induction hypothesis gives full
contact of order $k-1$.

Fix $p\in M_s^n$.  For $0\leq j\leq k-3$, define on the unit non-null vectors
at $p$
\begin{equation*}
 f_j(w)=\langle(\overline\nabla^{\,j}\II)(w^{j+2}),\II(w,w)\rangle.
\end{equation*}
By Theorem~\ref{thm:higher-identities}\textup{(ii)}, applied to the
spacelike contact of order $k$, the restriction of $f_j$ to either unit
pseudo-sphere is constant.  Let $u,v\in T_pM_s^n$ be orthonormal.  Since
$v\in T_u\Sigma_p^{\langle u,u\rangle}$, differentiation gives
\begin{equation}\label{eq:main-derivative}
 0=\sum_{a=0}^{j+1}
 \langle(\overline\nabla^{\,j}\II)(u^a,v,u^{j-a+1}),\II(u,u)\rangle
 +2\langle(\overline\nabla^{\,j}\II)(u^{j+2}),\II(u,v)\rangle.
\end{equation}

The full contact of order $k-1$ and
Theorem~\ref{thm:higher-identities}\textup{(i)} provide all
orthogonality identities of total derivative order at most $j-1$.
Therefore Lemma~\ref{lem:transfer-identity}, used at the borderline total
order $N=j$ with $i=0$, gives, for every $a$,
\begin{equation*}
 \langle(\overline\nabla^{\,j}\II)(u^a,v,u^{j-a+1}),\II(u,u)\rangle
 =(-1)^j\langle(\overline\nabla^{\,j}\II)(u^{j+2}),\II(u,v)\rangle.
\end{equation*}
There are $j+2$ terms in the sum in
\eqref{eq:main-derivative}.  Hence
\begin{equation*}
 \bigl((j+2)(-1)^j+2\bigr)
 \langle(\overline\nabla^{\,j}\II)(u^{j+2}),\II(u,v)\rangle=0.
\end{equation*}
The coefficient equals $j+4$ when $j$ is even and $-j$ when $j$ is odd,
so it never vanishes.  Consequently,
\begin{equation*}
 \langle A_{(\overline\nabla^{\,j}\II)(u^{j+2})}u,v\rangle
 =\langle(\overline\nabla^{\,j}\II)(u^{j+2}),\II(u,v)\rangle=0
\end{equation*}
for every unit non-null $v\perp u$.  By linearity and continuity the same
holds for every $v\in u^\perp$.  Thus $u$ is an eigenvector of
$A_{(\overline\nabla^{\,j}\II)(u^{j+2})}$.  Since $u$ was an arbitrary
unit non-null vector and $0\leq j\leq k-3$,
Theorem~\ref{thm:contact-criterion} gives full contact of order $k$.
This completes the induction.  Equality \eqref{eq:causal-invariance}
follows directly from the equivalence for every $k$.
\end{proof}

\begin{remark}[Proof perspective]
The preceding proof is the inductive argument originally developed in
\cite[Thm.~4.27]{GomezThesis}.  Its conceptual content is visible in
Theorem~\ref{thm:higher-identities}: the diagonal scalar expressions are
homogeneous polynomials in the tangent direction.  Once the induction has
produced the relevant identity on either the spacelike or the timelike
open cone, polynomial continuation determines it on the whole tangent
space.  The polynomial argument does not replace the transfer induction;
it explains why the resulting higher-order obstruction cannot distinguish
the two causal families.
\end{remark}

\begin{remark}
To determine the contact number it is enough to study only spacelike
normal sections or only timelike normal sections.  This behavior is not
typical of pseudo-Riemannian geometry: there are Lorentzian manifolds for
which all spacelike geodesics are complete while some timelike geodesics
are incomplete; see \cite[p.~154]{ONeill}.  In this precise sense the
contact number is causally invariant.
\end{remark}

\section{Consequences, examples, and the infinite-contact limit}

Combining Theorem~\ref{thm:causal-invariance} with the previously known
ordinary-contact characterizations gives the following immediate
consequences.

\begin{corollary}\label{cor:contact-three}
The following conditions are equivalent:
\begin{enumerate}
 \item $M_s^n$ has contact of order at least three;
 \item $M_s^n$ has spacelike contact of order at least three;
 \item $M_s^n$ has timelike contact of order at least three;
 \item $M_s^n$ is isotropic.
\end{enumerate}
\end{corollary}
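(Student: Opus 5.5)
The plan is to assemble the corollary from Theorem~\ref{thm:causal-invariance} and Theorem~\ref{thm:known-low-orders}; no new computation should be needed.

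First, \textup{(i)}, \textup{(ii)} and \textup{(iii)} are equivalent. Having contact of order at least three means having contact of order $k$ for some $k\geq3$. Contact of order $k$ implies contact of every smaller order, since \eqref{eq:directional-contact} for $0\leq r\leq k$ contains the same conditions for $0\leq r\leq 3$. So each of the three conditions is equivalent to the corresponding contact condition of order exactly three. Theorem~\ref{thm:causal-invariance} with $k=3$ then shows that contact, spacelike contact and timelike contact of order three are all equivalent.

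Second, \textup{(i)} is equivalent to \textup{(iv)}. This is exactly the first assertion of Theorem~\ref{thm:known-low-orders}, namely \cite[Thm.~3.7(2)]{CFG}.

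As a cross-check that avoids the cited low-order result in one direction: the implication \textup{(ii)}$\Rightarrow$\textup{(iv)}, and likewise \textup{(iii)}$\Rightarrow$\textup{(iv)}, also follows directly from Theorem~\ref{thm:higher-identities}\textup{(ii)} with $k=3$ and $i=j=0$. That statement gives $\langle\II(w,w),\II(w,w)\rangle=\lambda_{00}\langle w,w\rangle^2$ for all $w$, which is \eqref{eq:isotropy}. This is the same step used for $k=3$ in the proof of Theorem~\ref{thm:causal-invariance}.

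There is no real obstacle here. The only point needing care is the monotonicity remark that ``order at least three'' reduces to order three, which is immediate from the definition.
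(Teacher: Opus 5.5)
Your proposal is correct and matches the paper's proof, which simply combines Theorem~\ref{thm:causal-invariance} at $k=3$ with Theorem~\ref{thm:known-low-orders}. Your explicit monotonicity remark and your cross-check via Theorem~\ref{thm:higher-identities}\textup{(ii)} are both valid; the cross-check is exactly the $k=3$ step in the proof of Theorem~\ref{thm:causal-invariance}.
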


\begin{proof}
Apply Theorems~\ref{thm:causal-invariance} and~\ref{thm:known-low-orders}.
\end{proof}

\begin{corollary}\label{cor:contact-four}
The following conditions are equivalent:
\begin{enumerate}
 \item $M_s^n$ has contact of order at least four;
 \item $M_s^n$ has spacelike contact of order at least four;
 \item $M_s^n$ has timelike contact of order at least four;
 \item $M_s^n$ is constant isotropic.
\end{enumerate}
\end{corollary}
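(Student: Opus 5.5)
The plan is to obtain Corollary~\ref{cor:contact-four} by chaining Theorem~\ref{thm:causal-invariance} at order $k=4$ with the low-order characterization in Theorem~\ref{thm:known-low-orders}. This parallels the proof of Corollary~\ref{cor:contact-three}.

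First I will show that (i), (ii) and (iii) are equivalent. Having contact (respectively spacelike or timelike contact) of order at least four means having it of order $4$. Indeed, contact of order $k$ in the sense of \eqref{eq:directional-contact} implies contact of every lower order, so each of the three ``at least four'' conditions reduces to its order-$4$ version. Theorem~\ref{thm:causal-invariance} with $k=4$ then gives (i)$\Leftrightarrow$(ii)$\Leftrightarrow$(iii).

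Next I will show that (i) and (iv) are equivalent. This is exactly the second assertion of Theorem~\ref{thm:known-low-orders}, which holds in the indefinite setting with no further dimensional hypothesis, as noted after that theorem.

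Combining the two steps closes the cycle of equivalences.

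I expect no real obstacle. The only point to check is that ``order at least four'' is read through the monotonicity of contact in $k$, so that the order-$4$ statement of Theorem~\ref{thm:causal-invariance} applies directly.

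As an optional consistency check, one can verify (ii)$\Rightarrow$(iv) intrinsically. Theorem~\ref{thm:higher-identities}(ii)--(iii) with $k=4$ and $i=j=0$ shows that $\langle\II(w,w),\II(w,w)\rangle=\lambda_{00}\langle w,w\rangle^2$ for all $w$, with $\lambda_{00}$ constant. This is constant isotropy, derived from one-sided contact alone.
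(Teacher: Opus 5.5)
Your proposal is correct and takes the same approach as the paper, which proves the corollary by combining Theorem~\ref{thm:causal-invariance} at order four with Theorem~\ref{thm:known-low-orders}. Your monotonicity remark and the optional check via Theorem~\ref{thm:higher-identities}(ii)--(iii) with $i=j=0$ and $k=4$ are both sound.
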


\begin{proof}
Again, this follows from Theorems~\ref{thm:causal-invariance}
and~\ref{thm:known-low-orders}.
\end{proof}

For ordinary contact, the equivalences between the first and fourth
conditions in Corollaries~\ref{cor:contact-three} and
\ref{cor:contact-four} are precisely \cite[Thm.~3.7(2)--(3)]{CFG},
extending \cite[Thm.~4.1(2)--(3)]{ChenLi}.  The new content is that either
the spacelike or the timelike unit bundle alone detects the same property.

The next two standard consequences are
\cite[Corollaries~3.9 and~3.11]{CFG}.

\begin{corollary}
Each of the following conditions implies $\cnum(M_s^n)=\infty$:
\begin{enumerate}
 \item $M_s^n$ is isotropic and $\overline\nabla \II=0$;
 \item $M_s^n$ is totally umbilical.
\end{enumerate}
In either case, Theorem~\ref{thm:causal-invariance} also gives
$\cplus(M_s^n)=\cminus(M_s^n)=\infty$.
\end{corollary}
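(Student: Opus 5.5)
We prove $\cnum(M_s^n)=\infty$ by verifying the eigenvector criterion of Theorem~\ref{thm:contact-criterion} for every order $k\geq3$. We then read off the one-sided statement from Theorem~\ref{thm:causal-invariance}, or directly from the definitions, since full contact of every order implies spacelike and timelike contact of every order.

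\emph{Case (i): $M_s^n$ is isotropic and $\overline\nabla\II=0$.}
\begin{enumerate}
\item Since $\overline\nabla\II=0$, the recursive definition gives $\overline\nabla^{\,j}\II=0$ for all $j\geq1$. Each $\overline\nabla^{\,j}\II$ is built by differentiating the previous tensor, and the derivative of the zero tensor is zero.
\item Consequently, for $k\geq3$ the only nontrivial requirement in Theorem~\ref{thm:contact-criterion} is the case $j=0$: every unit non-null $u$ must be an eigenvector of $A_{\II(u,u)}$.
\item Isotropy gives $\ip{\II(w,w)}{\II(w,w)}=\lambda\ip{w}{w}^2$ for all $w$. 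Differentiate this at $w=u$ in a direction $v\perp u$. The left side gives $4\ip{\II(u,v)}{\II(u,u)}$, and the right side gives $4\lambda\ip{u}{u}\ip{u}{v}=0$.
\item By \eqref{eq:shape-II}, $\ip{A_{\II(u,u)}u}{v}=\ip{\II(u,v)}{\II(u,u)}$. Hence $A_{\II(u,u)}u\in (u^\perp)^\perp=\R u$, using that $u$ is non-null.
\end{enumerate}
So the criterion holds for every $k$, and contact is infinite. Alternatively, step 4 is just Theorem~\ref{thm:known-low-orders}'s isotropy direction. The only care needed is that this $j=0$ eigenvector condition follows from isotropy alone, without the constancy of $\lambda$.

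\emph{Case (ii): $M_s^n$ is totally umbilical.} Then $\II(X,Y)=\ip{X}{Y}H$ with $H$ the mean curvature vector. The main point is to show $\nabla^\perp H=0$. We do this via Codazzi, as follows.
\begin{enumerate}
\item Compute $(\overline\nabla\II)(Y,Z,X)=\ip{Y}{Z}\nabla^\perp_XH$.
\item Symmetry in $X,Y$ gives $\ip{Y}{Z}\nabla^\perp_XH=\ip{X}{Z}\nabla^\perp_YH$.
\item Choose $Y=Z$ non-null and $X\perp Y$. This is possible because $n\geq2$ and the metric is non-degenerate. The right side vanishes, so $\nabla^\perp_XH=0$ for all $X$ orthogonal to some non-null vector, hence for all $X$.
\end{enumerate}
Thus $\overline\nabla\II=0$. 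Moreover, $\ip{\II(u,u)}{\II(u,u)}=\ip{H}{H}\ip{u}{u}^2$, so the immersion is isotropic, and case (i) applies.

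The main potential obstacle is the Codazzi step in case (ii) when $n=2$ with indefinite metric. There we must check that a non-null $Y$ with a nonzero $X\perp Y$ exists, which is clear, and that such vectors $X$ span $T_pM_s^n$; we obtain this by varying $Y$ over an open set of non-null vectors.
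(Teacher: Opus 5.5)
Your proposal is correct and follows essentially the same route as the paper. In case (i) all $\overline\nabla^{\,j}\II$ with $j\geq1$ vanish, and the $j=0$ eigenvector condition comes from isotropy; you derive it directly by differentiating the isotropy identity, whereas the paper cites Theorem~\ref{thm:known-low-orders} together with Theorem~\ref{thm:contact-criterion} at $k=3$. In case (ii) Codazzi forces $\nabla^\perp H=0$, and your spanning argument over the hyperplanes $Y^\perp$ replaces the paper's continuity step. Your remark that $\cplus=\cminus=\infty$ already follows from the definitions, without Theorem~\ref{thm:causal-invariance}, is also accurate.
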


\begin{proof}
Under the first assumption, the criterion for $j=0$ follows from
third-order contact, while $\overline\nabla^{\,j}\II=0$ for every $j\geq1$.
Theorem~\ref{thm:contact-criterion} applies for every $k$.

If $M_s^n$ is totally umbilical, then
$\II(X,Y)=\ip{X}{Y}\mathbf H$.  Substitution into Codazzi gives
\begin{equation*}
 \ip{Y}{Z}\nabla_X^\perp\mathbf H=\ip{X}{Z}\nabla_Y^\perp\mathbf H.
\end{equation*}
For a non-null $X$, choose a non-null $Y\perp X$ and set $Z=Y$; then
$\nabla_X^\perp\mathbf H=0$.  By continuity,
$\nabla^\perp\mathbf H=0$, so $\II$ is parallel.  Total umbilicity also
implies isotropy, and the first assertion applies.
\end{proof}

\begin{example}[A Lorentzian cylinder]\label{ex:cylinder}
This is the example recorded in \cite[Remark~3.10]{CFG}.  In
Lorentz--Minkowski three-space $\mathbb L^3$, with metric
$-dx^2+dy^2+dz^2$, consider
\begin{equation*}
 M_1^2=\R\times\mathbb S^1
 =\{(x,y,z)\in\mathbb L^3:y^2+z^2=1\}.
\end{equation*}
Its second fundamental form is parallel.  It vanishes on the unit timelike
direction tangent to the $x$-factor and is nonzero on the unit spacelike
direction tangent to the circle.  Hence the immersion is not isotropic.
Corollary~\ref{cor:contact-three}, together with
Proposition~\ref{prop:contact-two} and
Theorem~\ref{thm:causal-invariance}, gives
\begin{equation*}
 \cnum(M_1^2)=\cplus(M_1^2)=\cminus(M_1^2)=2.
\end{equation*}
\end{example}

\begin{remark}[The normal index in finite high-contact examples]
There is an elementary but decisive restriction.  If an isotropic
immersion $M_s^n\to\mathbb E_\nu^{n+d}$ has indefinite induced metric and
$\nu=s$ or $\nu=s+d$, then it is totally umbilical
\cite[Thm.~3.14]{CFG}.  Consequently, when $0<s<n$, a non-totally
umbilical example with finite contact number greater than two must have an
indefinite normal bundle.  In particular, a Lorentzian surface with finite contact number greater
than two cannot lie in a Lorentz--Minkowski space of index one.  This is why
the next construction uses $\mathbb E_4^8$.
\end{remark}

Chen and Li exhibited a Euclidean surface in $\mathbb E^8$ with contact
number six \cite[Example~6.8]{ChenLi}; Chen's later paper contains further
Riemannian high-contact families \cite{ChenHighContact}.  The next
construction shows that genuinely higher finite contact also occurs in
indefinite signature.

\begin{example}[A Lorentzian surface with contact number six]
\label{ex:lorentz-contact-six}
Put
\begin{equation*}
 a=\sqrt{1+\frac1{\sqrt2}},
 \qquad
 b=\sqrt{1-\frac1{\sqrt2}}.
\end{equation*}
Let $\mathbb E_1^2$ carry the metric $dx^2-dy^2$, and endow
$\mathbb E_4^8$ with
\begin{equation*}
 dz_1^2+dz_2^2-dz_3^2-dz_4^2
 +dz_5^2+dz_6^2-dz_7^2-dz_8^2.
\end{equation*}
For $c,d>0$, set
\begin{equation*}
\begin{aligned}
 \Theta_{c,d}(x,y)=\bigl(&\cos(cx)\cosh(dy),\ \sin(cx)\cosh(dy),\\
                         &\sin(cx)\sinh(dy),\ \cos(cx)\sinh(dy)\bigr).
\end{aligned}
\end{equation*}
Then
\begin{equation*}
 \Phi(x,y)=\frac1{\sqrt2}
 \bigl(\Theta_{a,b}(x,y),\Theta_{b,a}(x,y)\bigr)
\end{equation*}
defines an isometric immersion
$\Phi\colon\mathbb E_1^2\to\mathbb E_4^8$ whose ordinary, spacelike and
timelike contact numbers are all equal to six.

Indeed, let $G_0=\operatorname{diag}(1,1,-1,-1)$ and
$G=G_0\oplus G_0$, where $\oplus$ denotes the block-diagonal direct
sum of matrices.  For $c>0$, set
\begin{equation*}
 R_c=
 \begin{pmatrix}
 0&-c&0&0\\ c&0&0&0\\ 0&0&0&c\\ 0&0&-c&0
 \end{pmatrix},
 \qquad
 S_c=
 \begin{pmatrix}
 0&0&0&c\\ 0&0&c&0\\ 0&c&0&0\\ c&0&0&0
 \end{pmatrix}.
\end{equation*}
Define
\begin{equation*}
 P=R_a\oplus R_b,
 \qquad Q=S_b\oplus S_a,
 \qquad
 v_0=\frac1{\sqrt2}(1,0,0,0,1,0,0,0)^{\mathsf t}.
\end{equation*}
A direct multiplication gives
\begin{equation*}
 P^{\mathsf t}G+GP=0,
 \qquad Q^{\mathsf t}G+GQ=0,
 \qquad PQ=QP,
\end{equation*}
and $\Phi(x,y)=\exp(xP+yQ)v_0$.  Moreover, for every
$(x_0,y_0)\in\mathbb E_1^2$,
\begin{equation*}
 \Phi(x+x_0,y+y_0)=\exp(x_0P+y_0Q)\Phi(x,y).
\end{equation*}
Since $P$ and $Q$ are $G$-skew-adjoint and commute,
$\exp(x_0P+y_0Q)$ is an ambient linear isometry.  Hence all the
extrinsic data are transported equivariantly, and it suffices to work at
the origin.  There,
\begin{equation*}
 e_1:=\partial_x\Phi(0,0)=Pv_0,
 \qquad e_2:=\partial_y\Phi(0,0)=Qv_0.
\end{equation*}
A direct computation gives
$\langle e_1,e_1\rangle=1$,
$\langle e_2,e_2\rangle=-1$ and
$\langle e_1,e_2\rangle=0$.  By equivariance, the same identities
hold at every point, and hence $\Phi$ is isometric.

For $u=pe_1+qe_2$, put $L=pP+qQ$, and let
\begin{equation*}
 \pi^\perp z=z-\langle z,e_1\rangle e_1+\langle z,e_2\rangle e_2
\end{equation*}
be the normal projection at the origin.  Define normal vectors recursively
by
\begin{equation*}
 N_0=\pi^\perp(L^2v_0),
 \qquad
 N_{j+1}=\pi^\perp(LN_j).
\end{equation*}
Extend $u$ to the parallel field $p\,\partial_x+q\,\partial_y$ on
$\mathbb E_1^2$.  Since this field is parallel and the immersion is
equivariant, a normal field determined by a vector $\xi$ at the origin
is transported as $\exp(xP+yQ)\xi$.  Differentiation in the direction
$u$ therefore gives $L\xi$, and taking its normal component shows that
normal covariant differentiation at the origin is exactly
$\xi\mapsto\pi^\perp(L\xi)$.  It follows inductively that
\begin{equation*}
 N_j=(\overline\nabla^{\,j}\II)(u^{j+2}),
 \qquad j\geq0.
\end{equation*}
Since the induced metric is constant in the coordinates $(x,y)$, its
Levi--Civita connection vanishes on the coordinate vector fields.  Hence, by
the Gauss formula,
\begin{equation*}
 LPv_0=\II(u,e_1),
 \qquad
 LQv_0=\II(u,e_2),
\end{equation*}
and both vectors are normal at the origin.  Write
\begin{equation*}
 A_{N_j}u=C_1^{(j)}e_1+C_2^{(j)}e_2.
\end{equation*}
Then
\begin{equation*}
 C_1^{(j)}=\langle LPv_0,N_j\rangle,
 \qquad
 C_2^{(j)}=-\langle LQv_0,N_j\rangle.
\end{equation*}
Using $a^2+b^2=2$, $a^2-b^2=\sqrt2$ and $ab=1/\sqrt2$, the above recursion gives
\begin{align*}
 A_{N_0}u&=\frac32(p^2-q^2)u,
 &A_{N_1}u&=0,\\
 A_{N_2}u&=-\frac14(p^2-q^2)^2u,
 &A_{N_3}u&=0.
\end{align*}
Thus every non-null $u$ is an eigenvector of the shape operators
corresponding to $j=0,1,2,3$.  The equivariance of the construction
transports these identities to every point.  Hence
Theorem~\ref{thm:contact-criterion} gives contact of order at least six.
At the next order, the obstruction to proportionality is
\begin{equation*}
 pC_2^{(4)}-qC_1^{(4)}
 =pq(p^2+q^2)(p^4+6p^2q^2+q^4).
\end{equation*}
This is the determinant of the components of $u$ and $A_{N_4}u$ in the
basis $e_1,e_2$.  For the unit spacelike vector
$u=\sqrt2e_1+e_2$, its value is $51\sqrt2\neq0$.  Hence $u$ is not an
eigenvector of $A_{N_4}$, and spacelike contact of order seven fails.
Therefore $\cplus(\mathbb E_1^2)=6$, and
Theorem~\ref{thm:causal-invariance} yields
\begin{equation*}
 \cnum(\mathbb E_1^2)=\cplus(\mathbb E_1^2)
 =\cminus(\mathbb E_1^2)=6.
\end{equation*}
\end{example}

\subsection{Geodesic normal sections and helical immersions}

We say that the immersion has \emph{geodesic normal sections} if
$\beta_u=\gamma_u$ locally for every unit non-null tangent vector $u$.
Such an immersion has contact of every order.  The converse also holds,
without any analyticity assumption.

\begin{proposition}[The infinite-contact limit]
The following assertions are equivalent:
\begin{enumerate}
 \item the immersion has geodesic normal sections;
 \item $\cnum(M_s^n)=\infty$;
 \item $\cplus(M_s^n)=\infty$;
 \item $\cminus(M_s^n)=\infty$.
\end{enumerate}
\end{proposition}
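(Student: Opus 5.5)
The equivalences (ii)$\Leftrightarrow$(iii)$\Leftrightarrow$(iv) follow from Theorem~\ref{thm:causal-invariance}. For every $k\geq2$, contact, spacelike contact and timelike contact of order $k$ are equivalent, so the three sets of admissible orders coincide and their suprema agree; in particular one is unbounded if and only if the others are. Also (i)$\Rightarrow$(ii) is immediate: if $\beta_u=\gamma_u$ near $0$ for every unit non-null $u$, then all derivatives agree at $0$, and \eqref{eq:directional-contact} holds for every $k$. It therefore remains to prove (ii)$\Rightarrow$(i) without assuming analyticity, which is the real content of the statement.

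For (ii)$\Rightarrow$(i), fix $p$ and a unit non-null $u$ with $\langle u,u\rangle=\varepsilon$, and let $\gamma=\gamma_u$. The normal section $\beta_u$ is characterized, locally, as the unique unit-speed curve of $M_s^n$ through $p$ with velocity $u$ contained in $E(p,u)$. So it suffices to prove that $\gamma$ itself lies in $E(p,u)$ for small $t$; then $\gamma=\beta_u$ by uniqueness of that component. Equivalently, for every $w\in T_pM_s^n$ with $w\perp u$, the function $h_w(t)=\langle\gamma(t)-\psi(p),w\rangle$ must vanish identically. Rather than expanding $h_w$ in a Taylor series, I would derive a linear ODE for a frame-dependent family of such quantities.

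Along $\gamma$, take a parallel frame and the normal fields $N_j(t)=(\overline\nabla^{\,j}\II)(\gamma'^{\,j+2})$. Since $\gamma$ is a geodesic, Gauss--Weingarten give
\[
\widetilde\nabla_{\gamma'}N_j=-A_{N_j}\gamma'+N_{j+1},\qquad \gamma''=N_0 .
\]
Infinite contact and Theorem~\ref{thm:contact-criterion} give $A_{N_j}\gamma'=\mu_j(t)\gamma'$ at every point of $\gamma$, for all $j$, because $\gamma'(t)$ is unit non-null. Hence every derivative of $\gamma$ lies in $\Span\{\gamma',N_0,N_1,\dots\}$. The problem is that this span is not a priori finite-dimensional along the curve with the right structure.

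To obtain a closed system, let $V(t)=\Span\{\gamma'(t),N_j(t):j\geq0\}\subset\E^{n+d}_\nu$. Its dimension is bounded by $n+d$, and it is closed under $\widetilde\nabla_{\gamma'}$ by the formula above. On an open dense set where $\dim V(t)$ is locally constant, choose a smooth basis from finitely many $N_j$ and $\gamma'$. This gives a linear ODE system, whose solution shows that $V(t)$ is constant, equal to $V(t_0)$ in the ambient vector space. Continuity and a connectedness argument on $t$ then propagate this constancy to a neighbourhood of $0$. Since $\gamma'(0)=u$ and $N_j(0)\in T_p^\perp M_s^n$, we get $V(0)\subset\R u\oplus T_p^\perp M_s^n$. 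Hence $\gamma'(t)\in\R u\oplus T^\perp_pM_s^n$ for all small $t$, and integrating gives $\gamma(t)\in E(p,u)$.

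The main obstacle is this rank/constancy step at points where $\dim V(t)$ jumps. A lower semicontinuity argument fixes it: the maximal dimension is attained on an open set. On the closure of a maximal-rank interval containing $0$, the space is constant by the ODE, so $\gamma'(t)$ stays in $V(0)$. If the rank at $0$ is not maximal, I would instead argue directly on the finite sequence $N_0,\dots,N_{r}$ where $N_{r+1}(0)$ depends linearly on the earlier ones. The identities of Theorem~\ref{thm:higher-identities} are polynomial in $u$ with coefficients tensorial at each point, so such a linear dependence relation can be chosen with smooth coefficients along $\gamma$. I expect this to require some care, but no analyticity.
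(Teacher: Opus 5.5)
Your treatment of (ii)$\Leftrightarrow$(iii)$\Leftrightarrow$(iv) and of (i)$\Rightarrow$(ii) is fine and matches the paper. The argument for (ii)$\Rightarrow$(i), however, has a genuine gap exactly at the step you flagged, and the proposed repair is false.

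First, once $A_{N_j}\gamma'=\mu_j\gamma'$, the recursion gives $V(t)=\Span\{c^{(r)}(t):r\geq1\}$ with $c=\psi\circ\gamma$. So $V(t)$ is just the full osculating space of the ambient curve, and closure under differentiation holds for \emph{every} smooth curve. The contact hypothesis really enters only through $V(t)\subset\R\gamma'(t)\oplus T^\perp_{\gamma(t)}M_s^n$. You use this only at $t=0$, that is, only on the Taylor coefficients of $c$ at one point, which is exactly what cannot suffice without analyticity.

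Concretely, take the expansion of Example~\ref{ex:expansions} with $n=2$, $s=1$, $\ell=1$ and $F(x,y)=h(x)$, where $h(x)=e^{-1/x^2}$ for $x>0$ and $h(x)=0$ for $x\leq0$. It has infinite contact, since all $A_{N_j}$ vanish. Along $\gamma(t)=(t,0)$ one gets $c'(t)=(h'(t),1,0,h'(t))$ and $N_j(t)=h^{(j+2)}(t)\,\xi$, with $\xi=(1,0,0,1)$. Hence $V(0)=\R(0,1,0,0)$, while $V(t)=\Span\{(0,1,0,0),\xi\}$ for small $t>0$. This has three consequences:
\begin{enumerate}
\item $V$ is not constant near $0$;
\item $\gamma'(t)\notin V(0)$ for $t>0$;
\item no relation $N_{r+1}=\sum_{i\leq r}c_iN_i+c\,\gamma'$ valid at $t=0$ persists with smooth, or even bounded, coefficients, because $h^{(r+3)}/h^{(r+2)}\sim 2t^{-3}$ as $t\to0^+$.
\end{enumerate}
Theorem~\ref{thm:higher-identities} cannot supply such relations. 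It only makes the Gram matrix of $\gamma',N_0,N_1,\dots$ constant along $\gamma$, and in indefinite signature a constant Gram matrix does not see null combinations; here it vanishes identically on the $N_j$. In a definite metric it would force linear relations to persist, which is presumably the intuition behind your step.

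The paper does not prove this passage from scratch. Theorem~\ref{thm:contact-criterion}, applied with $k=j+3$, gives $A_{(\overline\nabla^{\,j}\II)(u^{j+2})}u=\alpha_ju$. Pairing with $u$ yields $\alpha_j=\varepsilon\langle(\overline\nabla^{\,j}\II)(u^{j+2}),\II(u,u)\rangle$. The paper then invokes Miura's characterization \cite[Lemma~3.3(iii)]{MiuraGNS} of geodesic normal sections by exactly these identities for all $j\geq0$.

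If you want a self-contained argument instead, it must use the inclusion $V(t)\subset\R\gamma'(t)\oplus T^\perp_{\gamma(t)}M_s^n$ at every $t$, not only at $t=0$, and it must cope with points where the rank of $V$ drops. Letting $t\to0$ in that inclusion does handle a single constant-rank interval abutting $0$: its constant space then lies in $\R u\oplus T_p^\perp M_s^n$. But rank drops may accumulate at $0$, and your sketch gives no argument for that case.
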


\begin{proof}
The first assertion implies the second by definition, and
Theorem~\ref{thm:causal-invariance} gives the equivalence of the last
three.  Conversely, assume that one of the two one-sided contact numbers
is infinite, and denote the corresponding causal sign by
$\varepsilon\in\{1,-1\}$.  Given $j\geq0$ and
$u\in U_\varepsilon M_s^n$, apply
Theorem~\ref{thm:contact-criterion} with $k=j+3$.  Then $u$ is an
eigenvector of $A_{(\overline\nabla^{\,j}\II)(u^{j+2})}$, say
\begin{equation*}
 A_{(\overline\nabla^{\,j}\II)(u^{j+2})}u=\alpha_j u.
\end{equation*}
Taking the scalar product with $u$ and using \eqref{eq:shape-II} gives
\begin{equation*}
 \alpha_j=\varepsilon
 \langle(\overline\nabla^{\,j}\II)(u^{j+2}),\II(u,u)\rangle.
\end{equation*}
Thus the condition in \cite[Lemma~3.3(iii)]{MiuraGNS} holds for every
$j\geq0$, and the immersion has geodesic normal sections.
\end{proof}

The semi-Riemannian converse from geodesic normal sections to helicality
requires extra hypotheses.  Miura proved that spacelike and timelike
helicality are equivalent and that helical immersions in semi-Riemannian
space forms have geodesic normal sections \cite{MiuraHelical}.  He later
showed that the converse is recovered by adding a geodesic
non-degeneracy order and the proper-order condition $(P_d)$
\cite[Cor.~3.10]{MiuraGNS}.  Thus the literal indefinite extension of the
Riemannian equivalence recalled in \cite[Thm.~4.41]{GomezThesis} is false:
osculating spaces may be degenerate, and the proper order of the image of a
geodesic may vary with its initial direction.

\begin{example}[Expansions]\label{ex:expansions}
Let $F=(f_1,\ldots,f_\ell)\colon\mathbb E_s^n\to\mathbb R^\ell$ be smooth,
and consider
\begin{equation*}
 \Psi_F(x)=(F(x),x,F(x))
 \colon\mathbb E_s^n\longrightarrow\mathbb E_{s+\ell}^{n+2\ell},
\end{equation*}
where the first $\ell$ coordinates are negative definite and the last
$\ell$ coordinates are positive definite.  This general expansion appears
in \cite[Example~2.1]{MiuraGNS}; the case $\ell=1$ is
\cite[Example~3.2]{CFG}.

Let $e_1,\ldots,e_\ell$ be the standard basis of $\mathbb R^\ell$ and put
$\xi_\alpha=(e_\alpha,0,e_\alpha)$.  The space
$\mathcal N=\Span\{\xi_1,\ldots,\xi_\ell\}$ is a constant totally null
normal subspace, and
\begin{equation}\label{eq:expansion-II}
 \II(X,Y)=\sum_{\alpha=1}^\ell
 \operatorname{Hess}f_\alpha(X,Y)\,\xi_\alpha.
\end{equation}
Every iterated covariant derivative of $\II$ takes values in $\mathcal N$.
Since $\mathcal N$ is totally null, \eqref{eq:shape-II} and
\eqref{eq:expansion-II} imply that all the corresponding shape operators
vanish.  Hence the ordinary, spacelike and timelike contact numbers are
all infinite.

The geodesic-normal-section property is also direct.  If
$\gamma(t)=p+tu$ is a geodesic of $\mathbb E_s^n$, then
\begin{equation*}
\begin{aligned}
 &\Psi_F(p+tu)-\Psi_F(p)-t\,d\Psi_{F,p}(u)\\
 &\qquad=\bigl(F(p+tu)-F(p)-t\,dF_p(u),0,
 F(p+tu)-F(p)-t\,dF_p(u)\bigr)\in\mathcal N.
\end{aligned}
\end{equation*}
Thus $\Psi_F\circ\gamma$ lies in the affine normal-section space.

These immersions need not be helical.  Take $n=2$, $s=1$, $\ell=1$, let
$\mathbb E_1^2$ carry the metric $dx^2-dy^2$, and set $f(x,y)=x^3$.
At the origin the timelike unit direction $e_2$ is mapped
to a straight line, whereas the timelike unit direction
$u=(e_1+2e_2)/\sqrt3$ is mapped to
\begin{equation*}
 t\longmapsto
 \left(\frac{t^3}{3\sqrt3},\frac{t}{\sqrt3},
       \frac{2t}{\sqrt3},\frac{t^3}{3\sqrt3}\right),
\end{equation*}
which is not a line but is contained in a two-dimensional affine plane.
The two geodesics have different proper orders.  Hence the immersion has
geodesic normal sections and infinite contact but is not helical.
\end{example}

\begin{remark}
Examples~\ref{ex:cylinder}, \ref{ex:lorentz-contact-six}, and~\ref{ex:expansions} exhibit, in indefinite signature, contact
numbers $2$, $6$, and $\infty$, respectively.  The last example also shows
why the extra non-degeneracy hypotheses in the indefinite helical theory
are essential.
\end{remark}


\begin{thebibliography}{99}

\bibitem{Besse}
A.~L. Besse,
\emph{Manifolds All of Whose Geodesics Are Closed},
Ergebnisse der Mathematik und ihrer Grenzgebiete, vol.~93,
Springer-Verlag, Berlin--Heidelberg--New York, 1978.
\url{https://doi.org/10.1007/978-3-642-61876-5}

\bibitem{BoumukiMaeda}
N.~Boumuki and S.~Maeda,
\emph{Study of isotropic immersions},
Kyungpook Math. J. \textbf{45} (2005), no.~3, 363--394.

\bibitem{CFG}
J.~L. Cabrerizo, M.~Fern\'andez and J.~S. G\'omez,
\emph{The contact number of a pseudo-Euclidean submanifold},
Taiwanese J. Math. \textbf{12} (2008), no.~7, 1707--1720.
\url{https://doi.org/10.11650/twjm/1500405081}

\bibitem{CFGRigidity}
J.~L. Cabrerizo, M.~Fern\'andez and J.~S. G\'omez,
\emph{Rigidity of pseudo-isotropic immersions},
J. Geom. Phys. \textbf{59} (2009), no.~7, 834--842.
\url{https://doi.org/10.1016/j.geomphys.2009.03.006}

\bibitem{CFGMarginallyTrapped}
J.~L. Cabrerizo, M.~Fern\'andez and J.~S. G\'omez,
\emph{Isotropy and marginally trapped surfaces in a spacetime},
Class. Quantum Grav. \textbf{27} (2010), no.~13, 135005, 12 pp.
\url{https://doi.org/10.1088/0264-9381/27/13/135005}

\bibitem{CFGIsotropicSubmanifolds}
J.~L. Cabrerizo, M.~Fern\'andez and J.~S. G\'omez,
\emph{Isotropic submanifolds of pseudo-Riemannian spaces},
J. Geom. Phys. \textbf{62} (2012), no.~9, 1915--1924.
\url{https://doi.org/10.1016/j.geomphys.2012.05.002}

\bibitem{ChenHighContact}
B.-Y. Chen,
\emph{Surfaces with high contact number and their characterization},
Ann. Mat. Pura Appl. (4) \textbf{185} (2006), no.~4, 555--576.
\url{https://doi.org/10.1007/s10231-005-0169-1}

\bibitem{ChenLi}
B.-Y. Chen and S.-J. Li,
\emph{The contact number of a Euclidean submanifold},
Proc. Edinb. Math. Soc. (2) \textbf{47} (2004), no.~1, 69--100.
\url{https://doi.org/10.1017/S0013091503000038}

\bibitem{ChenVerheyen}
B.-Y. Chen and P.~Verheyen,
\emph{Submanifolds with geodesic normal sections},
Math. Ann. \textbf{269} (1984), no.~3, 417--429.
\url{https://doi.org/10.1007/BF01450703}

\bibitem{Fueki}
S.~Fueki,
\emph{Pointed helical submanifolds of pseudo-Riemannian manifolds},
J. Geom. \textbf{62} (1998), 129--143.
\url{https://doi.org/10.1007/BF01237605}

\bibitem{HongHouh}
Y.~Hong and C.-S.~Houh,
\emph{Helical immersions and normal sections},
Kodai Math. J. \textbf{8} (1985), no.~2, 171--192.
\url{https://doi.org/10.2996/kmj/1138037046}

\bibitem{GomezThesis}
J.~S. G\'omez Casanueva,
\emph{Inmersiones isotr\'opicas pseudo-riemannianas},
Ph.D. thesis, Universidad de Sevilla, 2008. Available at
\href{https://idus.us.es/items/2eb9696c-419f-4523-8788-6b90fd29ab01}{idUS}.

\bibitem{ItohOgiue}
T.~Itoh and K.~Ogiue,
\emph{Isotropic immersions},
J. Differential Geom. \textbf{8} (1973), no.~2, 305--316.

\bibitem{MaedaTsukada}
S.~Maeda and K.~Tsukada,
\emph{Isotropic immersions into a real space form},
Canad. Math. Bull. \textbf{37} (1994), no.~2, 245--253.
\url{https://doi.org/10.4153/CMB-1994-036-5}

\bibitem{MiuraHelical}
K.~Miura,
\emph{Helical geodesic immersions of semi-Riemannian manifolds},
Kodai Math. J. \textbf{30} (2007), no.~3, 322--343.
\url{https://doi.org/10.2996/kmj/1193924937}

\bibitem{MiuraGNS}
K.~Miura,
\emph{Isometric immersions with geodesic normal sections in
semi-Riemannian geometry},
Tokyo J. Math. \textbf{31} (2008), no.~2, 479--488.
\url{https://doi.org/10.3836/tjm/1233844064}

\bibitem{Miura}
K.~Miura,
\emph{The contact number of an affine immersion and its upper bounds},
Results Math. \textbf{56} (2009), 245--258.
\url{https://doi.org/10.1007/s00025-009-0444-3}

\bibitem{ONeillIsotropic}
B.~O'Neill,
\emph{Isotropic and K\"ahler immersions},
Canad. J. Math. \textbf{17} (1965), 907--915.
\url{https://doi.org/10.4153/CJM-1965-086-7}

\bibitem{ONeill}
B.~O'Neill,
\emph{Semi-Riemannian Geometry with Applications to Relativity},
Pure and Applied Mathematics, vol.~103,
Academic Press, New York, 1983.

\bibitem{Verheyen}
P.~Verheyen,
\emph{Submanifolds with geodesic normal sections are helical},
Rend. Sem. Mat. Univ. Politec. Torino \textbf{43} (1985), 511--527.

\end{thebibliography}
\end{document}